\newcommand{\bu}{\boldsymbol u}
\newcommand{\Om}{\Omega}
\newcommand{\bv}{\boldsymbol v}
\newcommand{\bw}{\boldsymbol w}
\newcommand{\bbu}{\boldsymbol b}
\newcommand{\btau}{\boldsymbol \tau}
\newcommand{\be}{\boldsymbol e}
\newcommand{\bvar}{\boldsymbol \varphi}
\newcommand{\bs}{\boldsymbol s}
\newcommand{\bff}{\boldsymbol f}
\newtheorem{Theorem}{Theorem}[section]
\newtheorem{lema}[Theorem]{Lemma}
\newtheorem{remark}[Theorem]{Remark}
\newtheorem{Proof}{{\em Proof:}}
\newenvironment{proof}{\begin{Proof}\rm}{\hfill $\Box$ \end{Proof}}
\title{Uniform in time error estimates for a finite element method applied to a downscaling data assimilation algorithm
for the Navier--Stokes equations
}
\author{ Bosco
Garc\'{i}a-Archilla\thanks{Departamento de Matem\'atica Aplicada
II, Universidad de Sevilla, Sevilla, Spain. Research is supported by
Spanish MINECO under grant MTM2015-65608-P (bosco@esi.us.es).}
  \and Julia Novo\thanks{Departamento de
Matem\'aticas, Universidad Aut\'onoma de Madrid, Spain.  Research is supported
by Spanish MINECO
under grant MTM2016-78995-P (AEI/FEDER, UE) and VA024P17 (Junta de Castilla y Leon, ES) cofinanced by FEDER funds (julia.novo@uam.es).}
\and Edriss S. Titi\thanks{Department of Mathematics, Texas A\&M University, College Station, TX 77843, USA. Department of Applied Mathematics and Theoretical Physics, University of Cambridge, Wilberforce Road, Cambridge CB3 0WA, UK.  Department of Computer Science and Applied Mathematics, Weizmann Institute of Science, Rehovot 76100, Israel. Research is supported in part by the ONR grant N00014-15-1-2333, the Einstein Stiftung/Foundation - Berlin,
through the Einstein Visiting Fellow Program, and by the John Simon Guggenheim Memorial Foundation (titi@math.tamu.edu \quad  Edriss.Titi@damtp.cam.ac.uk).}}
\begin{document}

\maketitle


%
\begin{abstract}
In this paper we analyze a finite element method applied to a continuous downscaling data assimilation algorithm for the numerical approximation of the two and three dimensional Navier-Stokes equations corresponding to given measurements on a coarse spatial scale. For representing the coarse mesh measurements we consider different types of interpolation operators including a Lagrange interpolant. We obtain uniform-in-time estimates for the error between a finite element approximation and the reference solution corresponding to the coarse mesh measurements. We consider both the case of a plain Galerkin method and a Galerkin method with grad-div stabilization. For the stabilized method we prove error bounds in which the constants do not depend on inverse powers of the viscosity. Some numerical experiments illustrate the theoretical results.
\end{abstract}

\noindent{\bf Keywords.} data assimilation, downscaling, Navier-Stokes equations, uniform-in-time error estimates, mixed finite elements method.

\noindent{\bf AMS subject classifications.} 35Q30,  65M12, 65M15, 65M20, 65M60, 65M70, 76B75. \\

\section{Introduction}
Data assimilation refers to a class of techniques that combine experimental data and simulation in order to obtain better predictions in a physical system.
There is a vast literature on data assimilation methods, specially in the recent years
(see e.g., \cite{Asch_et_al_2016}, \cite{Daley_1993},
 \cite{Kalnay_2003}, \cite{Law_Stuart_Zygalakis}, \cite{Reich_Cotter_2015}, and the references
therein). One of these techniques is {\it nudging}, where a penalty term is added in order to drive the approximate solution towards coarse mesh or large scale spatial observations of the data. In a recent work~\cite{Az_Ol_Ti}, a new approach, known as continuous data assimilation,  is introduced for a large class of dissipative partial differential equations, including Rayleigh-B\'enard convection \cite{FJTi}, the planetary geostrophic ocean dynamics model \cite{FLTi}, etc.
(see also references therein). Continuous data assimilation has also been used in numerical studies, for example, with the Chafee-Infante reaction-diffusion equation  the Kuramoto-Sivashinsky
equation (in the context of feedback control)~\cite{Lunasin-Titi}, Rayleigh-B\'enard convection equations~\cite{Altaf_et_al}, \cite{FJJTi}, and the Navier-Stokes equations~\cite{Gesho-Olson-Titi}, \cite{HOTi}. However, there is much less numerical analysis of this technique. The present work concerns with the numerical analysis of continuous data assimilation for the Navier-Stokes equations when discretized with mixed finite element methods (MFE).

To be more precise, we consider the Navier-Stokes equations (NSE)
\begin{align}
\label{NS} \partial_t\bu -\nu \Delta \bu + (\bu\cdot\nabla)\bu + \nabla p &= \bff &&\text{in }\ (0,T]\times\Omega,\nonumber\\
\nabla \cdot \bu &=0&&\text{in }\ (0,T]\times\Omega,
\end{align}
in a bounded domain $\Omega \subset {\mathbb R}^d$, $d \in \{2,3\}$. In~\eqref{NS},
$\bu$ is the velocity field, $p$ the kinematic pressure, $\nu>0$ the kinematic viscosity coefficient,
 and $\bff$ represents the accelerations due to external body forces acting
on the fluid. The Navier-Stokes equations \eqref{NS} must be complemented with boundary conditions. For simplicity,
we only consider homogeneous
Dirichlet boundary conditions $\bu = \boldsymbol 0$ on $\partial \Omega$.


Following \cite{Mondaini_Titi} we consider given coarse spatial scale measurements, corresponding to a solution $\bu$ of \eqref{NS}, observed at a coarse spatial mesh. The measurements are assumed to be continuous in time and error-free. We denote by $I_H(\bu)$ the operator used for interpolating these
measurements, where $H$ denotes the resolution of the coarse spatial mesh.
Since the initial condition for $\bu$ is missing one cannot compute $\bu$ by simulating equation \eqref{NS} directly.
To overcome this difficulty it was suggested in~\cite{Az_Ol_Ti} to consider instead a solution~$\bv$ of the following approximating system
\begin{eqnarray}\label{eq:mod_NS}
 \partial_t\bv -\nu \Delta \bv + (\bv\cdot\nabla)\bv + \nabla \tilde p&=&\bff -\beta(I_H(\bv)-I_H(\bu)),\ \text{in }\ (0,T]\times\Omega,\nonumber\\
\nabla \cdot \bv&=&0, \ \text{in }\ (0,T]\times\Omega,
\end{eqnarray}
where $\beta$ is the relaxation (nudging) parameter.

In the case of the Navier-Stokes equations (and indeed, of many other nonlinear dissipative systems), it is well-known that
for relatively not so small Reynolds numbers, solutions are unstable and even chaotic. For this reason, it is expected that any small error in the initial data could lead to exponentially growing error in the solutions. Notably, the instabilities in the NSE occur at the large spatial scales, while the fine scales are stabilized by the viscosity. For this reason once the large spatial scales are stabilized, as it is done in the proposed downscaling data assimilation approximation, equation \eqref{eq:mod_NS},  the corresponding solution are stable and converge to the same solution~$\bu$ that is corresponding
to~$I_H(\bu)$. This is the very
reason that small errors are not magnified in time and allows to obtain uniform in time error bounds.

In this paper we consider a semidiscretization in space with inf-sup stable mixed finite elements for equation \eqref{eq:mod_NS}
 and analyze two different methods:  the Galerkin method and the Galerkin method and grad-div stabilization.
 Grad-div stabilization
was originally proposed in \cite{FH88} to improve the conservation of mass in
finite element methods.  However, it has been observed in the
simulation of turbulent flows, \cite{JK10}, \cite{RL10}, that using only grad-div stabilization
 produced stable (non-oscillating) simulations.
 We prove uniform-in-time error estimates for approximating the unknown reference solution, $\bu$, that corresponds to the coarse spatial scale measurement~$I_H(\bu)$.
 For the Galerkin method without grad-div stabilization, the spatial error bounds we prove are optimal, in the sense that the rate of convergence is that of the best interpolant. In the case we add grad-div stabilization, as in~\cite{grad-div1}, \cite{grad-div2}, we get error bounds in which the error constants do not depend on inverse powers of~the viscosity parameter~$\nu$. This fact is of importance in many applications where viscosity is orders of magnitude smaller than the velocity (i.e., large Rynolds number).  The convergence rates we prove in our error bounds are sharp and
 confirmed by numerical experiments.

We now comment on the analysis of numerical methods for~\eqref{eq:mod_NS}.  In~\cite{Mondaini_Titi}, a semidiscrete postprocessed Galerkin
spectral method
for the two-dimensional Navier-Stokes equations is studied. Under suitable conditions on the nudging parameter~$\beta$
and the coarse mesh resolution~$H$,
uniform-in-time error estimates are obtained for the difference between the numerical approximation to~$\bv$ and~$\bu$. Furthermore, the use of a postprocessing technique introduced in~\cite{Titi1}~\cite{Titi2}, allows for higher convergence rates than
a standard spectral Galerkin   method. A fully-discrete method for the spatial discretization in~\cite{Mondaini_Titi} is analyzed in~\cite{Ibdah_Mondaini_Titi},
where the backward Euler method is used for time discretization. Fully implicit and semi-implicit methods are considered, and optimal uniform-in-time
error estimates are obtained with the same convergence rate in space as in~\cite{Mondaini_Titi}.

More closely related to the present work are~\cite{Larios_et_al} and~\cite{Rebholz-Zerfas}.
In~\cite{Rebholz-Zerfas} they only analyze linear problems and, for the proof of the results on the Navier-Stokes equations they
present, they refer to~\cite{Larios_et_al} with some differences that they point out. They also present a wide collection of numerical experiments.
 In~\cite{Larios_et_al},
 the authors consider fully discrete approximations to equation~\eqref{eq:mod_NS}
where the spatial discretization is performed with a MFE Galerkin method plus grad-div stabilization. A second order IMEX in time scheme is analyzed in~\cite{Larios_et_al}, and, as in~\cite{Ibdah_Mondaini_Titi}, \cite{Mondaini_Titi}
 and the present paper, uniform-in-time error bounds are obtained. Compared with~\cite{Larios_et_al}, for the same convergence rate, the error bounds in the present paper
         have constants that do not depend on inverse powers of the viscosity parameter~$\nu$ (Theorem~\ref{Th:main_muno0}) or, for similar error constants, error bounds in the present paper have an order of convergence one unit larger (Theorem~\ref{Th:main} below).
Also, the analysis in~\cite{Larios_et_al} is restricted to $I_H\bu$ being an
interpolant for non smooth functions (Cl\'ement, Scott-Zhang, etc), since it makes explicit use of bound \eqref{eq:L^2inter}, which is not valid for nodal (Lagrange) interpolation (neither it is~\eqref{eq:cotainter}).
 In the present paper, we prove error bounds for the case in which  \eqref{eq:L^2inter} holds,   but also for the case in which $I_H \bu$ is a standard Lagrange interpolant (Theorem~\ref{Th:main_la} below).
%
To our knowledge, this is the first time in the literature where such kind of bounds are proved. Also, compared
 with  \cite{Larios_et_al} and~\cite{Rebholz-Zerfas},  we remove the upper bound assumed on the nudging parameter $\beta$. The authors of  \cite{Larios_et_al} had
 observed (see \cite[Remark 3.8]{Larios_et_al}) that the upper bound they required in the analysis does not hold in the numerical experiments and they state that a different approach to the analysis should be used to remove the upper bound on $\beta$. An analogous upper bound on $\beta$ appears also in \cite{Ibdah_Mondaini_Titi}
 and \cite{Mondaini_Titi},  where the value of $H$ depends on the inverse of the nudging parameter $\beta$ which means that increasing the value of $\beta$ would require a smaller value of $H$.

 Although the analysis of the present paper could be extended to fully discrete methods following for example the techniques in~\cite{grad-div1}, \cite{grad-div2} we believe that the new ideas introduced in the present paper are easier to understand in the framework of the semidiscrete methods. The extension of the analysis of the present paper to the fully discrete case will be subject of future work.

The rest of the paper is as follows. Section~\ref{Se:prelim} is devoted to preliminary material, in Section~\ref{Se:main} we introduce
and analyze the finite element method for equation~\eqref{eq:mod_NS} with and without grad-div stabilization. In Subsection~\ref{sub_la} we analyze the case in which $I_H \bu$ is the standard Lagrange interpolant. Finally, in Section~\ref{se:num} some numerical experiments are shown to illustrate the theoretical results.

\section{Preliminaries and Notation}
\label{Se:prelim}
Throughout the paper, $W^{s,p}(D)$ will denote the Sobolev space of real-valued functions defined on the domain $D\subset\mathbb{R}^d$ with distributional derivatives of order up to $s$ in $L^p(D)$. We denote by~$|\cdot|_{s,p,D}$ standard seminorm, and, following~\cite{Constantin-Foias}, for~$W^{s,p}(D)$ we
will use the norm~$\|\cdot\|_{s,p,D}$ defined by
$$
\left\| f\right\|_{s,p,D}^p=\sum_{j=0}^s \left|D\right|^{\frac{p(j-s)}{d}} \left| f\right|_{j,p,D}^p,
$$
where $|D|$ stands for the Lebesgue measure of~$D$
so that $\left\|f\right\|_{m,p,D}\left|D\right|^{\frac{m}{d}-\frac{1}{p}}$ is scale invariant. If $s$ is not a positive integer, $W^{s,p}(D)$ is defined by interpolation \cite{Adams}.
 In the case $s=0$ one has $W^{0,p}(D)=L^p(D)$. As it is standard, $W^{s,p}(D)^d$ will be endowed with the product norm and, since no confusion can arise, it will be denoted again by $\|\cdot\|_{W^{s,p}(D)}$. The case  $p=2$  will be distinguished by using $H^s(D)$ to denote the space $W^{s,2}(D)$. The space $H_0^1(D)$ is the closure in $H^1(D)$ of the set of infinitely differentiable functions with compact support
in $D$.  For simplicity, $\|\cdot\|_s$ (resp. $|\cdot |_s$) is used to denote the norm (resp. semi norm) both in $H^s(\Omega)$ or $H^s(\Omega)^d$. The exact meaning will be clear by the context. The inner product of $L^2(\Omega)$ or $L^2(\Omega)^d$ will be denoted by $(\cdot,\cdot)$ and the corresponding norm by $\|\cdot\|_0$
 in general $D$ is skipped in the notation for the norm when $D=\Omega$.
For vector-valued functions, the same conventions will be used as before.
The norm of the dual space  $H^{-1}(\Omega)$  of $H^1_0(\Omega)$
is denoted by $\|\cdot\|_{-1}$.
As usual, $L^2(\Omega)$ is always identified
with its dual, so one has $H^1_0(\Omega)\subset L^2(\Omega)\subset H^{-1}(\Omega) $ with compact injection.
The following Sobolev's embedding \cite{Adams} will be used in the analysis: For~$s>0$, let  $1\le p<d/s$
and~$q$ be such that $\frac{1}{q}
= \frac{1}{p}-\frac{s}{d}$. Then, there exists a positive  scale invariant constant $c_s$ such that
\begin{equation}\label{sob1}
\|v\|_{L^{q'}(\Omega)} \le c_s |\Omega|^{\frac{s}{d}-\frac{1}{p}+\frac{1}{q'}}\| v\|_{W^{s,p}(\Omega)}, \qquad
\frac{1}{q'}
\ge \frac{1}{q}, \quad \forall v \in
W^{s,p}(\Omega).
\end{equation}
If $p>d/s$ the above relation is valid for $q'=\infty$.
A similar embedding inequality holds for vector-valued functions.

We will also use the following interpolation inequality~(see, e.g., \cite[formula~(6.7)]{Constantin-Foias} and~\cite[Exercise~II.2.9]{Galdi})
\begin{equation}
\label{eq:parti_ineq}
\left\|v\right\|_{L^{{2d}/{(d-1)}}(\Omega)} \le \textcolor{black}{c_{1}}\left\|v\right\|_0^{1/2}
\left\|  v\right\|_1^{1/2},\qquad \forall v\in H^1(\Omega),
\end{equation}
\textcolor{black}{(where, for simplicity, by enlarging the constants if necessary, we may take the constant~$c_1$ in~(\ref{eq:parti_ineq}) equal to~$c_s$ in~\eqref{sob1} for $s=1$)}
and~Agmon's inequality
\begin{equation}
\label{eq:agmon}
\left\|v \right\|_\infty\le c_{\mathrm{A}} \left\|v\right\|_{d-2}^{1/2} \left\|v\right\|_2^{1/2},\qquad d=2,3,
\qquad \forall v\in H^2(\Omega).
\end{equation}
The case $d=2$ is a direct consequence of~\cite[Theorem 3.9]{Agmon}.
For $d=3$, a proof for domains of class~$C^2$ can be found in~\cite[Lemma~4.10]{Constantin-Foias}. By means of the Calder\'on extension  theorem~(see e.g., \cite[Theorem 4.32]{Adams} the proof is also valid for bounded Lipschitz domains. \textcolor{black}{Finally, we will use Poincar\'e's inequality,
\begin{equation}
\label{Poin}
\left\| v\right\|_0\le c_P|\Omega |^{1/d} \|\nabla v\|_0 ,\qquad \forall v\in H^1_0(\Omega),
\end{equation}
where the constant~$c_P$ can be taken $c_P\le \sqrt{2}/2$.  Denoting
by
\begin{equation}
\label{eq:hatcp}
\hat c_P=1+c_P^2,
\end{equation}
observe that from~(\ref{Poin}) it follows that
\begin{equation}\label{Poin2}
\left\| v\right\|_1\le (\hat c_P)^{1/2} \|\nabla v\|_0,\qquad \forall v\in H^1_0(\Omega).
\end{equation}
}
In all previous inequalities, the constants~$c_s$, $c_1$,
$c_A$ and~$c_P$ are scale-invariant, as it will be the case of all constants in the present paper unless explicitly stated otherwise.

Let $\cal H$ and $V$ be the Hilbert spaces
$
{\cal H}=\{ \bu \in \big(L^{2}(\Om))^d  \, |\, \mbox{div}(\bu)=0, \,
\bu\cdot n_{|_{\partial \Omega}}=0 \}$,
$V=\{ \bu \in \big(H^{1}_{0}(\Om))^d  \, | \, \mbox{div}(\bu)=0 \}$,
endowed with the inner product of $L^{2}(\Om)^{d}$ and $H^{1}_{0}(\Om)^{d},$
respectively.

Let $\mathcal{T}_{h}=(\tau_j^h,\phi_{j}^{h})_{j \in J_{h}}$, $h>0$ be a family of partitions of suitable domains $\Omega_h$, where $h$ is the maximum diameter of the elements $\tau_j^h\in \mathcal{T}_{h}$, and $\phi_j^h$ are the mappings from the reference simplex $\tau_0$ onto $\tau_j^h$.
We shall assume that the partitions are shape-regular and quasi-uniform. Let $r \geq 2$, we consider the finite-element spaces
\begin{eqnarray*}
S_{h,r}&=&\left\{ \chi_{h} \in \mathcal{C}\left(\overline{\Om}_{h}\right) \,  \big|
\, {\chi_{h}}_{|{\tau_{j}^{h}}}
\circ \phi^{h}_{j} \, \in \, P^{r-1}(\tau_{0})  \right\} \subset H^{1}(\Om_{h}),
\nonumber\\
{S}_{h,r}^0&=& S_{h,r}\cap H^{1}_{0}(\Om_{h}),
\end{eqnarray*}
where $P^{r-1}(\tau_{0})$ denotes the space of polynomials of degree at most $r-1$ on $\tau_{0}$. For $r=1$,
$S_{h,1}$ stands for the space of piecewise constants.

When $\Omega$ has polygonal or polyhedral boundary $\Omega_h=\Omega$ and mappings~$\phi_j^h$ from the reference simplex are affine.
When $\Omega$ has a smooth boundary, for the purpose of analysis we will assume that $\Omega_h$ exactly matches $\Omega$, as it is done
for example in~\cite{chenSiam}, \cite{Schatz98}, although at a price of a more complex analysis discrepancies between $\Omega_h$ and
$\Omega$ can also be taken into account (see, e.g., \cite{Ay_Gar_Nov}, \cite{Schatz-Whalbin}).

We shall denote by $(X_{h,r}, Q_{h,r-1})$ the MFE pair known as Hood--Taylor elements \cite{BF,hood0}, when $r\ge 3$, where
\begin{eqnarray*}
X_{h,r}=\left({S}_{h,r}^0\right)^{d},\quad
Q_{h,r-1}=S_{h,r-1}\cap L^2(\Om_{h})/{\mathbb R},\quad r
\ge 3,
\end{eqnarray*}
and, when $r=2$, the MFE pair known as the mini-element~\cite{Brezzi-Fortin91} where $Q_{h,1}=S_{h,2}\cap L^2(\Om_{h})/{\mathbb R}$, and $X_{h,2}=({S}_{h,2}^0)^{d}\oplus{\mathbb B}_h$. Here, ${\mathbb B}_h$ is spanned by the bubble functions $\bbu_\tau$, $\tau\in\mathcal{T}_h$, defined by $\bbu_\tau(x)=(d+1)^{d+1}\lambda_1(x)\cdots
\lambda_{d+1}(x)$,  if~$x\in \tau$ and 0 elsewhere, where $\lambda_1(x),\ldots,\lambda_{d+1}(x)$ denote the barycentric coordinates of~$x$. For these elements a uniform inf-sup condition is satisfied (see \cite{BF}), that is, there exists a constant $\beta_{\rm is}>0$ independent of the mesh grid size $h$ such that
\begin{equation}\label{lbbh}
 \inf_{q_{h}\in Q_{h,r-1}}\sup_{v_{h}\in X_{h,r}}
\frac{(q_{h},\nabla \cdot v_{h})}{\|v_{h}\|_{1}
\|q_{h}\|_{L^2/{\mathbb R}}} \geq \beta_{\rm{is}}.
\end{equation}
The velocity 
will be approximated by elements of~the discrete divergence-free space
\begin{eqnarray*}
V_{h,r}=X_{h,r}\cap \left\{ \chi_{h} \in H^{1}_{0}(\Om_{h})^d \mid
(q_{h}, \nabla\cdot\chi_{h}) =0  \quad\forall q_{h} \in Q_{h,r-1}
\right\}.
\end{eqnarray*}
For each fixed time $t\in[0,T]$ the solution $(u,p)$ of \eqref{NS} is also
the solution of a Stokes problem with right-hand side $\bff-\bu_t-(\bu\cdot\nabla )\bu$. We
will denote by $(\bs_h,q_h)\in(X_{h,r},Q_{h,r-1}),$ its  MFE approximation
satisfying
\begin{eqnarray}
\nu(\nabla \bs_h,\nabla \bvar_h)-(q_h,\nabla \cdot \bvar_h)
&=&\nu(\nabla u,\nabla \bvar_h)
-(p,\nabla\cdot \bvar_h)\nonumber\\
&=&(\bff-\bu_t-(\bu\cdot \nabla \bu),\bvar_h)\quad \forall
\bvar_h\in X_{h,r},
\label{stokesnew}\\
(\nabla \cdot \bs_h,\psi_h)&=&0 \quad \forall \psi_h\in Q_{h,r-1}.\nonumber
\end{eqnarray}
We observe that $\bs_h=S_{h}(\bu) : V \rightarrow V_{h,r}$ is the discrete Stokes
projection of the solution $(\bu,p)$ of \eqref{NS} (see \cite{heyran0}) and satisfies
\begin{eqnarray*}
 \nu (\nabla S_h(\bu) , \nabla \bvar_{h} )=\nu( \nabla \bu , \nabla \bvar_{h}) -
 (p, \nabla \cdot \bvar_{h})=(\bff-\bu_{t}-(\bu\cdot \nabla )\bu , \bvar_{h}),
\end{eqnarray*}
for all $\bvar_h\in V_{h,r}$.
The following bound holds:
\begin{equation}
\|\bu-\bs_h\|_0+h\|\bu-\bs_h\|_1\le CN_j(\bu,p) h^j,\qquad
1\le j\le r,
\label{stokespro}
\end{equation}
where here and in the sequel, for $\bv\in V\cap H^j(\Omega)^d$ and~$q\in L^2_0(\Omega)\cap H^{j-1}(\Omega)$ we denote
\begin{equation}
\label{eq:N_j}
 N_j(\bv,q) = \|\bv\|_j + \nu^{-1} \|q\|_{H^{j-1}/{\mathbb R}}, \qquad j\ge 1.
\end{equation}

The proof of \eqref{stokespro} for $\Omega=\Omega_h$ can be found in \cite{heyran2}.
Under the same conditions, the bound for the pressure is (cf.~\cite{girrav})
\begin{equation}\label{stokespre}
\|p-q_h\|_{L^2/{\mathbb R}}\le
C_{\beta_{\rm is}} \nu N_j(\bu,p)h^{j-1}, \qquad 1\le j\le r,
\end{equation}
where the constant $C_{\beta_{\rm is}}$ depends on the constant $\beta_{\rm is}$ in 
\eqref{lbbh}.
Assuming that $\Omega$ is of class ${\cal C}^m$,
with $m \ge 3$, and using
standard duality arguments and~\eqref{stokespro}, one obtains
\begin{equation}\label{eq:stokes_menos1}
\| \bu-\bs_{h}\|_{-s}  \leq CN_r(\bu,p) h^{r+s} ,
\qquad 0 \leq s\leq \min(r-2, 1).
\end{equation}
We also consider a modified Stokes projection that was introduced in \cite{grad-div1} and that we denote by $\bs_h^m:V\rightarrow V_{h,r}$
satisfying
\begin{eqnarray}\label{stokespro_mod_def}
\nu(\nabla \bs_h^m,\nabla \bvar_h)=(\bff-\bu_{t}-(\bu\cdot \nabla )\bu-\nabla p , \bvar_{h}), \quad \forall \, \,
 \bvar_{h} \in V_{h,r}.
\end{eqnarray}
The following bound holds, see \cite{grad-div1}:
\begin{equation}
\|\bu-\bs_h^m\|_0+h\|\bu-\bs_h^m\|_1\le C\|\bu\|_j h^j,\qquad
1\le j\le r.
\label{stokespro_mod}
\end{equation}
Following \cite{chenSiam}, one can also obtain the following bound
\begin{align}
\|\nabla (\bu-\bs_h^m)\|_\infty\le C\|\nabla \bu\|_\infty \label{cotainfty1},
\end{align}
where $C$ does not depend on $\nu$.
We will denote by $\pi_h p$ the $L^2$ projection of the pressure $p$ onto $Q_{h,r-1}$. It holds
\begin{equation}\label{eq:L2p}
\|p-\pi_h p\|_0\le C h^{j-1}\|p\|_{H^{j-1}/{\mathbb R}},\qquad 1\le j\le r.
\end{equation}

If the family of
meshes is quasi-uniform then  the following inverse
inequality holds for each $\bv_{h} \in S_{h,r}$, see e.g., \cite[Theorem 3.2.6]{Cia78},
\begin{equation}
\label{inv} \| \bv_{h} \|_{W^{m,p}(K)} \leq c_{\mathrm{inv}}
h_K^{n-m-d\left(\frac{1}{q}-\frac{1}{p}\right)}
\|\bv_{h}\|_{W^{n,q}(K)},
\end{equation}
where $0\leq n \leq m \leq 1$, $1\leq q \leq p \leq \infty$, and $h_K$
is the diameter of~$K \in \mathcal T_h$.

In the sequel $I_h^{La} \bu\in X_{h,r}$ will denote the Lagrange interpolant of a continuous function $\bu$. The following bound
can be found in~\cite[Theorem 4.4.4]{brenner-scot}
\begin{equation}\label{eq:cotainter_la}
|\bu-I_h^{La} \bu|_{W^{m,p}(K)}\le c_\text{\rm int} h^{n-m}|\bu|_{W^{n,p}(K)},\quad 0\le m\le n\le k+1,
\end{equation}
where $n>d/p$ when $1< p\le \infty$ and $n\ge d$ when $p=1$.

We will assume that the interpolation operator $I_H$ is stable in $L^2$, that is,
\begin{eqnarray}\label{eq:L^2inter}
\|I_H \bu\|_0\le c_0\|\bu\|_0,\quad \forall \bu\in L^2(\Omega)^d,
\end{eqnarray}
and that it satisfies the following approximation property,
\begin{eqnarray}\label{eq:cotainter}
\|\bu-I_H\bu\|_0\le c_I H\|\nabla \bu\|_0,\quad \forall \bu\in H_0^1(\Omega)^d.
\end{eqnarray}
The Bernardi--Girault~\cite{Ber_Gir}, Girault--Lions~\cite{Girault-Lions-2001}, or the Scott--Zhang~\cite{Scott-Z} interpolation operators
satisfy
\eqref{eq:cotainter} and~\eqref{eq:L^2inter}. Notice that the interpolation can be
on
piecewise constants, as we use in
the numerical experiments in
Section~\ref{se:num}.

We remark that, for the error analysis, we do not condition (3.105) in \cite{Mondaini_Titi}, i.e., we do not assume
that
$
\|\bu-I_H(\bu)\|_{-1}\le c_{-1} H\|\bu\|_0$,  for $\bu\in L^2(\Omega)^d$.

\section{The finite element method}
\label{Se:main}
We consider the following method to approximate \eqref{eq:mod_NS}. Find $(\bu_h,p_h)\in X_{h,r}\times Q_{h,r-1}$ satisfying
for all $(\bvar_h,\psi_h)\in X_{h,r}\times Q_{h,r-1}$

\begin{align}\label{eq:method}
(\dot \bu_h,\bvar_h)
+\nu(\nabla \bu_h,\nabla \bvar_h)+b_h(\bu_h,&\bu_h,\bvar_h)+\mu(\nabla \cdot \bu_h,\nabla \cdot \bvar_h)
+(\nabla p_h,\bvar_h)\qquad\nonumber\\
&{}=(\bff,\bvar_h)-{\beta(I_H(\bu_h)-I_H(\bu),I_H\bvar_h)},\nonumber\\
(\nabla \cdot \bu_h,\psi_h)&=0,
\end{align}
where $\mu$ is a stabilization parameter that can be zero in case we do not stabilize the divergence or different from zero in case we add grad-div stabilization and $b_h(\cdot,\cdot,\cdot)$  is defined in the following way
$$
b_{h}(\bu_{h},\bv_{h},\bvar_{h}) =((\bu_{h}\cdot \nabla ) \bv_{h}, \bvar_{h})+ \frac{1}{2}( \nabla \cdot (\bu_{h})\bv_{h},\bvar_{h}),
\quad \, \forall \, \bu_{h}, \bv_{h}, \bvar_{h} \in X_{h,r}.
$$
Hereafter, we denote by~$(\cdot,\cdot)$ both the inner product in~$L^2$ and the duality action between~$H^{-1}$ and~$H^{1}_0$, depending
on the context.
It is straightforward to verify that $b_h$ enjoys the skew-symmetry property
\begin{equation}\label{skew}
b_h(\bu,\bv,\bw)=-b_h(\bu,\bw,\bv) \qquad \forall \, \bu, \bv, \bw\in H_0^1(\Omega)^d.
\end{equation}
Let us observe that taking $\bvar_h\in V_{h,r}$ from \eqref{eq:method} we get
\begin{align}\label{eq:method2}
(\dot \bu_h,\bvar_h)
+\nu(\nabla \bu_h,\nabla \bvar_h)+b_h(\bu_h,&\bu_h,\bvar_h)+\mu(\nabla \cdot \bu_h,\nabla \cdot\bvar_h)={}\\
&(\bff,\bvar_h)-\beta(I_H(\bu_h)-I_H(\bu),I_H\bvar_h).\nonumber
\end{align}

For the analysis below, we need to introduce the values~$\overline\mu$ and
$\overline k$, defined as follows
\begin{equation}
\label{eq:mubarra}
\overline \mu=\left\{\begin{array}{lcl} 0,&\quad& \hbox{\rm if~$\mu=0$},\\ 1,&& \hbox{\rm otherwise},\end{array}\right.
\qquad\qquad
\overline k=\left\{\begin{array}{lcl} 0,&\quad& \hbox{\rm if~$\mu=0$},\\ 1/\mu,&& \hbox{\rm otherwise}.\end{array}\right.
\end{equation}
The following lemma will be used for proving the main results of the section.
%
\begin{lema}\label{lema_general}
Let $\bu_h$ be the finite element approximation defined in
\eqref{eq:method2} and let $\bw_h,\btau_h^1,\btau_h^2:[0,T]\rightarrow V_{h,r}$ be functions satisying
\begin{align}\label{eq:wh}
(\dot \bw_h,\bvar_h)+\nu(\nabla \bw_h,\nabla \bvar_h)+b_h(\bw_h,&\bw_h,\bvar_h)+\mu(\nabla \cdot \bw_h,\nabla \cdot \bvar_h)
={}\\
&(\bff,\bvar_h)+(\btau_h^1,\bvar_h)+\overline \mu (\btau_h^2,\nabla \cdot \bvar_h),\nonumber
\end{align}
Assume that the quantity~$L$
defined in \eqref{eq:L}, below, when $\mu=0$, and in~\eqref{eq:L_muno0}, below, when $\mu>0$ is bounded. Then, if $\beta\ge 8L$ and $H$ satisfies condition~\eqref{eq:as2}, below, the following bounds hold for~$\be_h=\bu_h-\bw_h$,
\begin{align}\label{eq:muboth}
\|\be_h(t)\|_0^2\le& e^{-\gamma t/2}\|\be_h(0)\|_0^2+\int_0^t e^{-\gamma(t-s)/2} \left((1-\overline \mu)\frac{2\hat c_P}{\nu}+\frac{{\overline \mu}}{L}\right)\|\btau_h^1\|_{-1+\overline \mu}^2~ds
\nonumber\\
&{}+\int_0^t e^{-\gamma(t-s)/2}\left( \beta c_0^2\|\bu(s)-\bw_h(s)\|_0^2+ 2{\overline k} \|\tau_h^2\|_0^2\right)~ds,
\end{align}
where, $\overline\mu$ and~$\overline k$ are defined in~\eqref{eq:mubarra}, and  $\gamma$ is defined in \eqref{eq:gamma} below.
\end{lema}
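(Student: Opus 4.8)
The plan is to derive an energy identity for $\be_h=\bu_h-\bw_h$ by subtracting equation~\eqref{eq:wh} from~\eqref{eq:method2}, then to test the difference with $\bvar_h=\be_h$ and estimate each resulting term so as to produce a differential inequality of the form $\frac{d}{dt}\|\be_h\|_0^2+\gamma\|\be_h\|_0^2\le(\text{data terms})$, after which Gronwall's lemma yields~\eqref{eq:muboth}.

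Let me think through what the subtraction gives.

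The goal is a bound that decays like $e^{-\gamma t/2}$ in the initial data and has the forcing contributions $\btau_h^1$, $\btau_h^2$, and the nudging mismatch $\bu-\bw_h$ entering through convolution against the decaying exponential. Since both $\bu_h$ and $\bw_h$ live in $V_{h,r}$, and $\be_h\in V_{h,r}$ is a legitimate test function, the pressure and divergence-constraint terms drop out cleanly. The essential structure is that the nudging term $-\beta(I_H(\bu_h)-I_H(\bu),I_H\be_h)$ must be split as $-\beta(I_H\be_h,I_H\be_h)-\beta(I_H(\bw_h)-I_H(\bu),I_H\be_h)$; the first piece is $-\beta\|I_H\be_h\|_0^2$ and supplies the crucial negative feedback, while the second is a data term controlled by Cauchy--Schwarz and Young's inequality using the $L^2$ stability~\eqref{eq:L^2inter}, producing the $\beta c_0^2\|\bu-\bw_h\|_0^2$ contribution (with the remaining $\frac{\beta}{4}\|I_H\be_h\|_0^2$ absorbed into the feedback).

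First I would write the error equation
\begin{equation*}
(\dot\be_h,\bvar_h)+\nu(\nabla\be_h,\nabla\bvar_h)+b_h(\bu_h,\bu_h,\bvar_h)-b_h(\bw_h,\bw_h,\bvar_h)+\mu(\nabla\cdot\be_h,\nabla\cdot\bvar_h)=-\beta(I_H\be_h,I_H\bvar_h)-\beta(I_H(\bw_h)-I_H(\bu),I_H\bvar_h)-(\btau_h^1,\bvar_h)-\overline\mu(\btau_h^2,\nabla\cdot\bvar_h),
\end{equation*}
set $\bvar_h=\be_h$, and use skew-symmetry~\eqref{skew} to rewrite the nonlinear difference: $b_h(\bu_h,\bu_h,\be_h)-b_h(\bw_h,\bw_h,\be_h)=b_h(\be_h,\bw_h,\be_h)+b_h(\bu_h,\be_h,\be_h)$, and since $b_h(\bu_h,\be_h,\be_h)=0$ this reduces to the single term $b_h(\be_h,\bw_h,\be_h)$. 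Then I would estimate $|b_h(\be_h,\bw_h,\be_h)|$ using Holder, the interpolation inequality~\eqref{eq:parti_ineq}, and Young's inequality to split it into a piece absorbable by $\frac{\nu}{2}\|\nabla\be_h\|_0^2$ and a piece of the form $c\|\bw_h\|\text{-norm}\cdot\|\be_h\|_0^2$; this is exactly where the constant $L$ will be defined (its precise form in~\eqref{eq:L} versus~\eqref{eq:L_muno0} depending on whether one trades against viscous dissipation or against grad-div dissipation).

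The main obstacle, I expect, is the nonlinear trilinear term $b_h(\be_h,\bw_h,\be_h)$: controlling it in a $\nu$-independent way (the $\mu>0$ case) requires routing the estimate through the grad-div term $\mu\|\nabla\cdot\be_h\|_0^2$ rather than the viscous term, which is why two separate definitions of $L$ appear and why the $\overline\mu$/$\overline k$ bookkeeping is needed; the $\mu=0$ case instead spends $\frac{\nu}{2}\|\nabla\be_h\|_0^2$ and leaves a $\frac{2\hat c_P}{\nu}$ factor on $\|\btau_h^1\|_{-1}^2$ via Poincare~\eqref{Poin2}. The second delicate point is that the feedback term $-\beta\|I_H\be_h\|_0^2$ controls $\|\be_h\|_0^2$ only after invoking the approximation property~\eqref{eq:cotainter}: writing $\|\be_h\|_0\le\|I_H\be_h\|_0+\|\be_h-I_H\be_h\|_0\le\|I_H\be_h\|_0+c_I H\|\nabla\be_h\|_0$, so that under the smallness condition~\eqref{eq:as2} on $H$ the term $c_I H\|\nabla\be_h\|_0$ is dominated by the available dissipation, converting the $I_H$-feedback into genuine $\|\be_h\|_0^2$ damping and fixing $\gamma$. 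Assembling these, choosing $\beta\ge 8L$ to guarantee the coefficient of $\|\be_h\|_0^2$ is positive after all absorptions, and applying the integrating factor $e^{\gamma t/2}$ followed by integration in time yields~\eqref{eq:muboth}.
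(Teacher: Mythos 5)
Your proposal is correct and follows essentially the same route as the paper's proof: the same error equation, the same test function $\be_h$, the skew-symmetry reduction to $b_h(\be_h,\bw_h,\be_h)$ with the two $\mu$-dependent definitions of $L$, the splitting of the nudging term via \eqref{eq:L^2inter}, the conversion of the $-\beta\|I_H\be_h\|_0^2$ feedback into genuine $\|\be_h\|_0^2$ damping through \eqref{eq:cotainter} and the smallness condition \eqref{eq:as2}, and Gronwall. The only differences are immaterial bookkeeping constants in the Young splittings.
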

\begin{proof} Subtracting \eqref{eq:wh} from \eqref{eq:method2} we get the error equation
\begin{align}\label{eq:error1}
&(\dot\be_h,\bvar_h)+\nu(\nabla\be_h,\nabla\bvar_h)+\beta(I_H \be_h,I_H\bvar_h)+b_h(\bu_h,\bu_h,\bvar_h)
-b_h(\bw_h,\bw_h,\bvar_h)\nonumber\\
&{}+\mu(\nabla \cdot \be_h,\nabla \cdot \bvar_h)=
\beta(I_H \bu-I_H \bw_h,I_H \bvar_h)+(\btau_h^1,\bvar_h)+\overline \mu(\btau_h^2,\nabla \cdot \bvar_h),
\end{align}
for all $\bvar_h\in V_{h,r}$
Taking $\bvar_h=\be_h$ in \eqref{eq:error1} we get
\begin{align}\label{eq:error2}
&\frac{1}{2}\frac{d}{dt}\|\be_h\|_0^2+\nu\|\nabla \be_h\|_0^2+\beta \|I_H\be_h\|_0^2+\mu\|\nabla \cdot \be_h\|_0^2\le |b_h(\bu_h,\bu_h,\be_h)
\\
&{}-b_h(\bw_h,\bw_h,\be_h)|
 +\beta|(I_H \bu-I_H \bw_h,I_H\be_h)|+|(\btau_h^1,\be_h)|+|\overline \mu(\btau_h^2,\nabla \cdot \be_h)|.\quad\nonumber
\end{align}
We will bound the terms on the right-hand side of \eqref{eq:error2}. For the nonlinear term and the truncation errors
we argue differently  depending on whether $\mu=0$ or $\mu>0$.

If $\mu=0$, using the
skew-symmetry property \eqref{skew},  \textcolor{black}{\eqref{sob1} and~(\ref{Poin2})}, and when $d=3$, we have
\begin{align}\label{eq:error3}
|b_h(\bu_h,\bu_h,\be_h)
-b_h(&\bw_h,\bw_h,\be_h)|=|b_h(\be_h,\bw_h,\be_h)|
\\
{}\le& \|\nabla \bw_h\|_{L^{2d/(d-1)}}\|\be_h\|_{L^{2d}}\|\be_h\|_0
+\frac{1}{2}|(\nabla \cdot \be_h)\bw_h,\be_h)|\nonumber\\
{}\le &\textcolor{black}{\hat c_1}\|\nabla \bw_h\|_{L^{2d/(d-1)}}\|\nabla \be_h\|_0\|\be_h\|_0
+\frac{1}{2}\|\nabla \be_h\|_0\|\bw_h\|_\infty\|\be_h\|_0\nonumber\\
{}\le& \left(2\textcolor{black}{\hat c_1^2}\frac{\|\nabla\bw_h\|_{L^{2d/(d-1)}}^2}{\nu}+\frac{\|\bw_h\|_\infty^2}{\nu}
\right)\|\be_h\|_0^2+\frac{\nu}{4}\|\nabla \be_h\|_0^2,
\nonumber
\end{align}
\textcolor{black}{where
\begin{equation}
\label{eq:hatc1}
\hat c_1=(\hat c_P)^{1/2}c_1,
\end{equation}
 $c_1$ being the constant in~\eqref{sob1} for~$s=1$}.
In the case $d=2$, and noticing that $2d=2d/(d-1)$, the first term on the right-hand side above, using~\eqref{eq:parti_ineq}, (\ref{Poin2}) and Young's inequality is bounded as follows
\begin{align}
 \|\nabla \bw_h\|_{L^{\frac{2d}{d-1}}}\|\be_h\|_{L^{2d}}\|\be_h\|_0 &\le  \textcolor{black}{(\hat c_P)^{1/4}}c_1\|\nabla \bw_h\|_{L^{\frac{2d}{d-1}}}(\|\nabla \be_h\|_{0}\|\be_h\|_0)^{1/2}
 \|\be_h\|_0\nonumber\\
 &\le \textcolor{black}{{3}(\hat c_P)^{1/3}}c_1^{4/3}\frac{\|\nabla\bw_h\|_{L^{\frac{2d}{d-1}}}^{4/3}}{(\textcolor{black}{4}\nu)^{1/3}}\|\be_h\|_0^2 +
  \frac{\nu}{\textcolor{black}{4}}\|\nabla \be_h\|_0^2.
  \label{eq:engorro1}
\end{align}
For the truncation error when $\mu=0$ using \eqref{Poin2} we get
\begin{equation}\label{eq:error6}
|(\btau_h^1,\be_h)|\le \|\btau_h^1\|_{-1}\|\be_h\|_1\le \textcolor{black}{(\hat c_P)^{1/2}}\|\btau_h^1\|_{-1}\|\nabla \be_h\|_0
\le \frac{\textcolor{black}{\hat c_P}}{\nu}\|\btau_h^1\|_{-1}^2+\frac{\nu}{4}\|\nabla \be_h\|_0^2.
\end{equation}

When $\mu\neq 0$, we bound the nonlinear term in the following way. Using again the
skew-symmetry property \eqref{skew} we get
\begin{align}\label{eq:error3_muno0}
|&b_h(\bu_h,\bu_h,\be_h)
-b_h(\bw_h,\bw_h,\be_h)|=|b_h(\be_h,\bw_h,\be_h)|\le \|\nabla \bw_h\|_{\infty}\|\be_h\|_0^2\\
&{}+\frac{1}{2}\|\nabla \cdot \be_h\|_0\|\bw_h\|_\infty\|\be_h\|_0\le \|\nabla \bw_h\|_{\infty}\|\be_h\|_0^2
+\frac{\mu}{4}\|\nabla \cdot \be_h\|_0^2+\frac{\|\bw_h\|_\infty^2}{4\mu}\|\be_h\|_0^2.
\nonumber
\end{align}
In the sequel we denote
\begin{eqnarray}\label{eq:L}
L&=&\max_{t\ge0}\left(2\frac{\hat c_1^{2d/3}\|\nabla\bw_h(t)\|_{L^{2d/(d-1)}}^{2d/3}}{\nu^{(2d-3)/3}}+\frac{\|\bw_h(t)\|_\infty^2}{\nu}\right),\quad {\rm if}\quad \mu=0,
\\
\label{eq:L_muno0}
L&=&2\max_{t\ge 0}\left(\|\nabla \bw_h(t)\|_\infty+ \frac{\|\bw_h(t)\|_\infty^2}{4\mu}\right),\hfill\quad {\rm if}\quad \mu> 0,
\end{eqnarray}
Observe that in the case~$\mu=0$, bounding the factor $3(\hat c_P)^{1/3}/4^{1/3} $ in~(\ref{eq:engorro1}) by~$2(\hat c_P)^{2/3}$ we have the left-hand side of~(\ref{eq:error3}) can be bounded by $L\|\be_h\|_0^2+(\nu/2)\|\nabla\be_h\|_0^2$, and, in the case~$\mu>0$ the left-hand side of~(\ref{eq:error3_muno0}) is bounded by~$(L/2)\|\be_h\|_0^2
+(\mu/4)\|\nabla\cdot\be_h\|_0^2$.

Next, we bound the truncation error when $\mu>0$,
\begin{align}\label{eq:error6_muno0}
|(\btau_h^1,\be_h)|+|\overline \mu(\btau_h^2,\nabla \cdot \be_h)|\le \frac{1}{2L}\|\btau_h^1\|_0^2+\frac{L}{2}\|\be_h\|_0^2
+\overline k \|\tau_h^2\|_0^2+ \frac{\mu}{4}\|\nabla \cdot \be_h\|_0^2,
\end{align}
where $\overline k$ is defined in \eqref{eq:mubarra}.

For the second term on the right-hand side of \eqref{eq:error2} applying \eqref{eq:L^2inter} we get
\begin{eqnarray}\label{eq:error5}
\beta|(I_H \bu-I_H \bw_h,I_H\be_h)|&\le &\beta c_0\|\bu-\bw_h\|_0\|I_H\be_h\|_0
\nonumber \\
&\le& \frac{\beta}{2} c_0^2\|\bu-\bw_h\|_0^2
+\frac{\beta}{2}\|I_H\be_h\|_0^2.
\end{eqnarray}
Inserting \eqref{eq:error3}, \eqref{eq:engorro1}, \eqref{eq:error6}, \eqref{eq:error3_muno0}, \eqref{eq:error6_muno0} and \eqref{eq:error5} into \eqref{eq:error2} we get
\begin{align}\label{eq:otra}
\frac{1}{2}\frac{d}{dt}\|\be_h\|_0^2+&(1+\overline  \mu)\frac{\nu}{2}\|\nabla \be_h\|_0^2+\frac{\beta}{2} \|I_H \be_h\|_0^2
+\frac{\mu}{2}\|\nabla \cdot \be_h\|_0^2
\le L\|\be_h\|_0^2
\\ &{}+\overline k \|\tau_h^2\|_0^2+\frac{\beta}{2} c_0^2\|\bu-\bw_h\|_0^2+\left((1-\overline \mu)\frac{\hat c_P}{\nu}+\frac{\overline \mu}{2L}\right)\|\btau_h^1\|_{-1+\overline \mu}^2.
\nonumber
\end{align}
Now we bound
\begin{eqnarray*}
L\|\be_h\|_0^2\le 2L \|I_H e_h\|_0^2+2L \|(I-I_H)e_h\|_0^2.
\end{eqnarray*}
Since we are assuming that $\beta\ge 8L$ we have that $\beta/2-2L\ge \beta/4$, so that taking into account that $1+\overline \mu\ge 1$ and~$(\mu/2)\|\nabla\cdot\be_h\|\ge 0$ we get
\begin{align}
\label{eq:aver}
\frac{d}{dt}\|\be_h\|_0^2+{\nu}\|\nabla &\be_h\|_0^2
-4L\|(I-I_H)\be_h\|_0^2+\frac{\beta}{2} \|I_H \be_h\|_0^2
\le {}
\\ &2\overline k \|\tau_h^2\|_0^2+\beta c_0^2\|\bu-\bw_h\|_0^2+\left((1-\overline \mu)\frac{2\hat c_P}{\nu}+\frac{{\overline \mu}}{L}\right)\|\btau_h^1\|_{-1+\overline \mu}^2.
\nonumber
\end{align}
For the second and third terms on the left-hand side above, applying \eqref{eq:cotainter} to the latter, we write
\begin{eqnarray}
\label{eq:apply(21)a}
{\nu}\|\nabla \be_h\|_0^2-4L\|(I-I_H)e_h\|_0^2\ge {\nu}\|\nabla \be_h\|_0^2-4 L c_I^2 H^2 \|\nabla e_h\|_0^2\ge \frac{\nu}2\|\nabla e_h\|_0^2,
\end{eqnarray}
whenever
\begin{equation}
\label{eq:as2}
H\le \frac{\nu^{1/2}}{(8L)^{1/2}c_I}.
\end{equation}
Therefore, for the last three terms on the left-hand side of~(\ref{eq:aver}) we have
\begin{equation}\label{eq:aver2}
{\nu}\|\nabla \be_h\|_0^2+\frac{\beta}{2} \|I_H \be_h\|_0^2
-4L\|(I-I_H)\be_h\|_0^2\ge \frac{\nu}{2}\|\nabla \be_h\|_0^2+\frac{\beta}{2} \|I_H \be_h\|_0^2.
\end{equation}
Now, applying \eqref{eq:cotainter} again to bound below  the right-hand side above we have that
\begin{eqnarray}\label{eq:apply(21)b}
 \frac{\nu}{2}\|\nabla \be_h\|_0^2+\frac{\beta}{2} \|I_H \be_h\|_0^2 &\ge &\frac{\nu}{2}c_I^{-2}H^{-2}\|(I-I_H)e_h\|_0^2
 +\frac{\beta}{2} \|I_H \be_h\|_0^2\\
 {}&\ge & {\gamma} (\|I_H\be_h\|_0^2+\|(I-I_H)\be_h\|_0^2),
 \nonumber
\end{eqnarray}
where
\begin{equation}
\label{eq:gamma}
\gamma=\min\left\{\frac{\nu}{2}c_I^{-2}H^{-2},\frac{\beta}{2}\right\}.
\end{equation}
Finally, since
$
 {\gamma} (\|I_H\be_h\|_0^2+\|(I-I_H)\be_h\|_0^2)\ge (\gamma/2) \|e_h\|_0^2
$,
from~\eqref{eq:aver}, \eqref{eq:aver2} and~\eqref{eq:apply(21)b} it follows that
\begin{eqnarray*}
\frac{d}{dt}\|\be_h\|_0^2+\frac{\gamma}{2} \|\be_h\|_0^2
\le 2\overline k \|\tau_h^2\|_0^2+\beta c_0^2\|\bu-\bw_h\|_0^2
 +\left(\!\!(1-\overline \mu)\frac{2\hat c_P}{\nu}+\frac{{\overline \mu}}{L}\right)\!\|\btau_h^1\|_{-1+\overline \mu}^2,
\end{eqnarray*}
from which we reach \eqref{eq:muboth}.
\end{proof}
We now obtain the error bounds of the standard Galerkin method (case $\mu=0$).
\begin{Theorem}\label{Th:main}
Assume that the solution of~\eqref{NS} satisfies that $\bu\in L^\infty(H^s(\Omega)^d)$,
$p\in L^\infty (H^{s-1}(\Omega)/{\mathbb R})$, $\bu_t\in  L^\infty(H^{\max(2,s-1)}(\Omega)^d)$ and~$p_t\in L^\infty( H^{\max(1,s-2)}(\Omega)
/{\mathbb R})$ for $s\ge 2$. Let $\bu_h$ be the finite element approximation defined in
\eqref{eq:method2} with $\mu=0$.
Then, if $\beta\ge 8L$ and $H$ satisfies condition~\eqref{eq:as2}  the following bound holds for $t\ge 0$ and $2\le r\le s$,
\begin{align*}
\| \bu(t)-\bu_h(t)\|_0 \le  &e^{-\gamma t/2}\|\bu_h(0)-\bu(0)\|_0^2\\
&{}+C\Bigl(\max_{0\le \tau\le t} \bigl((\beta/\gamma)^{1/2}+(\gamma\nu)^{-1/2}K_0(\bu,p,|\Omega|)\bigr)
N_r(\bu,p)
\\
&{}+
(\gamma\nu)^{-1/2} \max_{0\le \tau\le t} |\Omega|^{(1+\hat r-r)/d}N_{\hat r}(\bu_t,p_t)\Bigr)h^r,
\end{align*}
where $\gamma$ is defined in \eqref{eq:gamma}, $K_0(\bu,p,|\Omega|)$ is defined in~\eqref{Cup} below and $\hat r=r-1$ if $r\ge 3$ and $\Omega$ is of class~${\cal C}^3$ and $\hat r=r$ otherwise.
\end{Theorem}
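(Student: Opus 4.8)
The plan is to invoke Lemma~\ref{lema_general} with $\mu=0$ (so that $\overline\mu=0$ and $\overline k=0$) and with $\bw_h$ taken to be the discrete Stokes projection $\bs_h=S_h(\bu)\in V_{h,r}$ of the reference solution. First I would check that $\bs_h$ satisfies the perturbed equation~\eqref{eq:wh}. Inserting the defining relation $\nu(\nabla\bs_h,\nabla\bvar_h)=(\bff-\bu_t-(\bu\cdot\nabla)\bu,\bvar_h)$, valid for all $\bvar_h\in V_{h,r}$, and using that $\bu$ is divergence free so that $((\bu\cdot\nabla)\bu,\bvar_h)=b_h(\bu,\bu,\bvar_h)$, one reads off the truncation error
\[
(\btau_h^1,\bvar_h)=(\dot\bs_h-\bu_t,\bvar_h)+b_h(\bs_h,\bs_h,\bvar_h)-b_h(\bu,\bu,\bvar_h),\qquad \bvar_h\in V_{h,r},
\]
with no $\btau_h^2$ contribution. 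Since the Stokes projection is linear, $\dot\bs_h=S_h(\bu_t)$, so the first term is the Stokes projection error of $\bu_t$.

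The heart of the proof is a uniform-in-time bound for $\|\btau_h^1\|_{-1}$, which I would obtain by splitting $\btau_h^1$ into the time-consistency part $S_h(\bu_t)-\bu_t$ and the nonlinear part $b_h(\bs_h,\bs_h,\cdot)-b_h(\bu,\bu,\cdot)$. For the first part, when $\Omega$ is of class~$\mathcal C^3$ and $r\ge3$ I would use the negative-norm Stokes estimate~\eqref{eq:stokes_menos1} applied to $\bu_t$ (itself divergence free, with associated pressure $p_t$), gaining one power of $h$ and hence $\hat r=r-1$; otherwise I would fall back on the $L^2$ bound~\eqref{stokespro} together with $\|\cdot\|_{-1}\le c_P|\Omega|^{1/d}\|\cdot\|_0$, which gives $\hat r=r$. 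Either way this yields the contribution $|\Omega|^{(1+\hat r-r)/d}N_{\hat r}(\bu_t,p_t)h^r$. For the nonlinear part, writing $\bphi=\bu-\bs_h$ so that $b_h(\bs_h,\bs_h,\cdot)-b_h(\bu,\bu,\cdot)=-b_h(\bphi,\bs_h,\cdot)-b_h(\bu,\bphi,\cdot)$, I would estimate each trilinear form tested against $\bvar_h\in V_{h,r}$ by a constant times $\|\bvar_h\|_1$, taking care, via the skew-symmetry~\eqref{skew} and an integration by parts, to leave $\bphi$ in $L^2$ with no derivative on it; combined with the embeddings~\eqref{sob1}, \eqref{eq:parti_ineq}, the optimal bound $\|\bphi\|_0\le CN_r(\bu,p)h^r$ from~\eqref{stokespro}, and the stability of $\bs_h$ to control its norms by those of $\bu$, this produces a bound of the form $CK_0(\bu,p,|\Omega|)N_r(\bu,p)h^r$.

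It remains to bound the data term $\|\bu-\bw_h\|_0=\|\bu-\bs_h\|_0\le CN_r(\bu,p)h^r$ appearing in~\eqref{eq:muboth}, again directly from~\eqref{stokespro}. Feeding these uniform-in-time estimates into~\eqref{eq:muboth}, using $\int_0^t e^{-\gamma(t-s)/2}\,ds\le 2/\gamma$ for the convolution integrals and taking the maximum over $0\le\tau\le t$ of the data norms, one finds that the $\beta c_0^2$ term generates the factor $(\beta/\gamma)^{1/2}$ and the $2\hat c_P/\nu$ term the factors $(\gamma\nu)^{-1/2}$. Taking square roots with $\sqrt{a+b}\le\sqrt a+\sqrt b$ gives the stated bound for $\|\be_h(t)\|_0=\|\bu_h(t)-\bs_h(t)\|_0$, the initial-data term descending from the first term of~\eqref{eq:muboth} after bounding $\|\be_h(0)\|_0$ by $\|\bu_h(0)-\bu(0)\|_0$ plus an $O(h^r)$ projection error; the theorem for $\|\bu(t)-\bu_h(t)\|_0$ then follows from the triangle inequality $\|\bu-\bu_h\|_0\le\|\bu-\bs_h\|_0+\|\be_h\|_0$, absorbing $\|\bu-\bs_h\|_0$ into the $h^r$ terms. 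One should also verify at the outset that the regularity hypotheses make $L$ in~\eqref{eq:L} finite, so that, under $\beta\ge 8L$ and~\eqref{eq:as2}, the lemma applies.

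The step I expect to be the main obstacle is precisely the uniform-in-time $H^{-1}$ estimate of the nonlinear consistency term with the sharp power $h^r$: one must arrange the trilinear estimates so that only $\|\bphi\|_0$, and not $\|\bphi\|_1$, enters, and must check that the time-independent norms of $\bu$ and of the Stokes projection $\bs_h$ — in particular the $L^\infty$ and $W^{1,2d/(d-1)}$ norms governing $L$ — are controlled under the stated regularity, with genuinely scale-invariant embedding constants so that the powers of $|\Omega|$ combine as in $K_0(\bu,p,|\Omega|)$. Collecting exactly the right norms of $\bu$ and $p$ into $K_0$ is the bookkeeping core of the argument.
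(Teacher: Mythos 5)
Your proposal is correct and follows essentially the same route as the paper: you apply Lemma~\ref{lema_general} with $\bw_h=\bs_h$, identify the same truncation error (with $\btau_h^2=0$), split it into the $\bu_t-\dot\bs_h$ part (handled via \eqref{eq:stokes_menos1} or \eqref{stokespro} to get the two cases for $\hat r$) and the nonlinear consistency part (which is exactly the content of the paper's Lemma~\ref{le:est-1} yielding $K_0$), and assemble via the convolution bound $2/\gamma$. The only cosmetic difference is that you propose to derive the trilinear estimate by hand rather than quoting Lemma~\ref{le:est-1}, and your verification that $L$ is finite corresponds to the paper's Lemma~\ref{le:est_sh_inf}.
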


\begin{proof}
Following \cite{Ay_Gar_Nov} we compare $\bu_h$ with $\bs_h$, where~$\bs_h$ satisfies~\eqref{stokesnew} for which we apply Lemma~\ref{lema_general}
with $\bw_h=\bs_h$. To bound
 $\|\bs_h\|_\infty$ and $\|\nabla \bs_h\|_{L^{2d/(d-1)}}$ in \eqref{eq:L} we apply \eqref{cota_sh_inf} and \eqref{la_cota}.

We observe that equation \eqref{eq:wh} holds with $\mu=0$ and $\btau_h^2=0$ and
$$
(\btau_h^1,\bvar_h)=(\bu_t-\dot \bs_h,\bvar_h)+b_h(\bu,\bu,\bvar_h)-b_h(\bs_h,\bs_h,\bvar_h),\quad \forall \bvar_h\in V_{h,r}.
$$
Then from \eqref{eq:muboth} we get
\begin{eqnarray*}
\|\be_h(t)\|_0^2&\le& e^{-\gamma t/2}\|\be_h(0)\|_0^2+\int_0^t e^{-\gamma(t-s)/2} \frac{2\hat c_P}{\nu}\|\btau_h^1\|_{-1}^2~ds
\nonumber\\
&&\quad+\int_0^t e^{-\gamma(t-s)/2} \beta c_0^2\|\bu(s)-\bw_h(s)\|_0^2~ds.
\end{eqnarray*}
Consequently,
\begin{align*}
\|\be_h(t)\|_0^2\le& e^{-\gamma t/2}\|\be_h(0)\|_0^2+\frac{4\textcolor{black}{\hat c_P}}{\nu \gamma}\max_{0\le \tau\le t}\|\tau_h(\tau)\|_{-1}^2
+2 c_0^2\frac{\beta}{\gamma}\max_{0\le \tau\le t}\|\bu(\tau)-\bs_h(\tau)\|_0^2.
\end{align*}
To bound the last term on the right-hand side of above we apply \eqref{stokespro} to get
$$
\max_{0\le \tau\le t}\|\bu(\tau)-\bs_h(\tau)\|_0^2\le C h^{2r}\max_{0\le \tau\le t}N_r(\bu(\tau),p(\tau)).
$$
For the truncation error, applying \eqref{eq:stokes_menos1} we can bound
$$
\|\bu_t-\dot \bs_h\|_{-1} \le C h^rN_{r-1}( \bu_t,p_t),
$$
or, in case we use the mini-element or the boundary is not of class~${\cal C}^3$, applying \eqref{stokespro} again we get
$$
\| \bu_t-\dot \bs_h\|_{-1} \le C|\Omega|^{1/d} h^rN_{r}( \bu_t,p_t).
$$
Also, applying~Lemma~\ref{le:est-1} below
we have
$$
\sup_{\|\bvar\|_1=1}|b_h(\bu,\bu,\bvar)-b_h(\bs_h,\bs_h,\bvar)|\le K_0 (\bu,p,|\Omega|)\|\bu-\bs_h\|_0,
$$
so that we conclude the proof by applying again \eqref{stokespro}.
\end{proof}

We observe from Theorem~\ref{Th:main} that the rate of convergence of the method is optimal $O(h^s)$ and, as in \cite{Mondaini_Titi}, we have obtained uniform
in time error estimates. In the following theorem we bound the error of the Galerkin method with grad-div stabilization (case $\mu >0$). Comparing with Theorem~\ref{Th:main}
we show that adding grad-div stabilization allows to remove the dependence of the error constants on inverse powers of the viscosity $\nu$.
\begin{Theorem}\label{Th:main_muno0}
Assume that  the solution of~\eqref{NS} satisfies that $\bu\in L^\infty(H^s(\Omega)^d)\cap W^{1,\infty}(\Omega)^d$,
$p\in L^\infty (H^{s-1}(\Omega)/{\mathbb R})$, $\bu_t\in  L^\infty(H^{s-1}(\Omega)^d)$ for $s\ge 2$. Let $\bu_h$ be the finite element approximation defined in
\eqref{eq:method2} with grad-div stabilization ($\mu\neq 0$).
Then, if $\beta\ge 8L$ and $H$ satisfies condition~\eqref{eq:as2}
the following bound holds for $t\ge 0$ and $2\le r\le s$,
\begin{align*}
\| \bu(t)-\bu_h(t)\|_0 \le& e^{-\gamma t/2}\|\bu_h(0)-\bu(0)\|_0^2\\
&{}+\frac{C}{L^{1/2}}h^{r-1}\!\!\max_{0\le \tau\le t} \biggl(\Bigl(\beta^{1/2}h+\mu^{1/2}+\frac{K_1(\bu,|\Omega|)}{L^{1/2}}
\Bigr)\|\bu\|_r  \\
&{} +\frac{1}{L^{1/2}} \|\bu_t\|_{r-1}+\frac{1}{\mu^{1/2}}\|p\|_{H^{r-1}/{\mathbb R}})\biggr),
\end{align*}
where $\gamma$ is defined in \eqref{eq:gamma} and $K_1(\bu,|\Omega|)$ is defined in~\eqref{Cu} below.
\end{Theorem}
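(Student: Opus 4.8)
The plan is to mirror the proof of Theorem~\ref{Th:main}, but to compare $\bu_h$ with the \emph{modified} Stokes projection $\bs_h^m$ of~\eqref{stokespro_mod_def} instead of the standard one~$\bs_h$. This is the whole point: the approximation bounds~\eqref{stokespro_mod} and~\eqref{cotainfty1} for~$\bs_h^m$ carry constants that are independent of~$\nu$, so choosing $\bw_h=\bs_h^m$ will keep $\nu$ out of the error constants. First I would apply Lemma~\ref{lema_general} with $\bw_h=\bs_h^m$ and $\mu>0$, so that $\overline\mu=1$ and $\overline k=1/\mu$ in~\eqref{eq:mubarra}. One should first check the hypothesis that $L$ in~\eqref{eq:L_muno0} is finite: since $\|\nabla\bs_h^m\|_\infty\le C\|\nabla\bu\|_\infty$ by~\eqref{cotainfty1} and $\|\bs_h^m\|_\infty\le C\|\bu\|_{W^{1,\infty}}$, both $\nu$-independent, the assumption $\bu\in W^{1,\infty}(\Omega)^d$ guarantees $L<\infty$.

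Second, I would identify the consistency terms for which $\bw_h=\bs_h^m$ solves~\eqref{eq:wh}. Substituting the defining relation~\eqref{stokespro_mod_def} into the left-hand side of~\eqref{eq:wh}, and using that $((\bu\cdot\nabla)\bu,\bvar_h)=b_h(\bu,\bu,\bvar_h)$ because $\nabla\cdot\bu=0$, that $(p,\nabla\cdot\bvar_h)=(p-\pi_h p,\nabla\cdot\bvar_h)$ for $\bvar_h\in V_{h,r}$, and that $\nabla\cdot\bs_h^m=\nabla\cdot(\bs_h^m-\bu)$, one is led to
\begin{align*}
(\btau_h^1,\bvar_h)&=(\dot\bs_h^m-\bu_t,\bvar_h)+b_h(\bs_h^m,\bs_h^m,\bvar_h)-b_h(\bu,\bu,\bvar_h),\\
\btau_h^2&=(p-\pi_h p)+\mu\,\nabla\cdot(\bs_h^m-\bu).
\end{align*}

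Third, specialising~\eqref{eq:muboth} to $\overline\mu=1$, $\overline k=1/\mu$, bounding the exponential kernel by $2/\gamma$, and taking square roots, I obtain
\[
\|\be_h(t)\|_0\le e^{-\gamma t/4}\|\be_h(0)\|_0+\Bigl(\tfrac{2}{\gamma L}\Bigr)^{1/2}\!\max_\tau\|\btau_h^1\|_0+\Bigl(\tfrac{2\beta c_0^2}{\gamma}\Bigr)^{1/2}\!\max_\tau\|\bu-\bs_h^m\|_0+\Bigl(\tfrac{4}{\gamma\mu}\Bigr)^{1/2}\!\max_\tau\|\btau_h^2\|_0.
\]
The crucial observation is that the hypotheses $\beta\ge 8L$ and $H\le\nu^{1/2}/((8L)^{1/2}c_I)$ together with~\eqref{eq:gamma} force $\gamma\ge 4L$, whence $\gamma^{-1/2}\le C L^{-1/2}$ and $(\gamma L)^{-1/2}\le C L^{-1}$; this is exactly what manufactures the uniform prefactor $C/L^{1/2}$. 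The Stokes term uses $\|\bu-\bs_h^m\|_0\le C\|\bu\|_r h^r$ from~\eqref{stokespro_mod}, producing the $\beta^{1/2}h\|\bu\|_r$ contribution. The term $\btau_h^2$ is estimated by~\eqref{eq:L2p} and~\eqref{stokespro_mod} as $\|\btau_h^2\|_0\le C h^{r-1}(\|p\|_{H^{r-1}/{\mathbb R}}+\mu\|\bu\|_r)$, so the factor $(\gamma\mu)^{-1/2}$ splits into the $\mu^{1/2}\|\bu\|_r$ and $\mu^{-1/2}\|p\|_{H^{r-1}/{\mathbb R}}$ terms. For the time-derivative piece of $\btau_h^1$, I would note that for the solution of~\eqref{NS} the modified Stokes projection reduces to the $H^1_0$-orthogonal (Ritz) projection onto $V_{h,r}$, which is linear and time-independent, so $\dot\bs_h^m$ is the Ritz projection of $\bu_t$ and~\eqref{stokespro_mod} gives $\|\bu_t-\dot\bs_h^m\|_0\le C\|\bu_t\|_{r-1}h^{r-1}$.

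I expect the nonlinear consistency term $\sup_{\|\bvar_h\|_0=1}|b_h(\bs_h^m,\bs_h^m,\bvar_h)-b_h(\bu,\bu,\bvar_h)|$ to be the main obstacle and the place where $K_1(\bu,|\Omega|)$ of~\eqref{Cu} is produced. Writing $\be=\bu-\bs_h^m$ and using bilinearity, it equals $-b_h(\be,\bs_h^m,\bvar_h)-b_h(\bu,\be,\bvar_h)$. The pieces carrying derivatives of $\bs_h^m$ are bounded directly with $\|\nabla\bs_h^m\|_\infty\le C\|\nabla\bu\|_\infty$ from~\eqref{cotainfty1}, giving harmless $O(h^r)$ contributions. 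The dangerous pieces are those carrying a derivative of $\be$, namely $((\bu\cdot\nabla)\be,\bvar_h)$ and the divergence part $\frac12((\nabla\cdot\be)\bs_h^m,\bvar_h)$: here I would integrate by parts (using~\eqref{skew} and $\nabla\cdot\bu=0$ to throw the derivative onto $\bvar_h$) and then invoke the inverse inequality~\eqref{inv} to trade $\|\nabla\bvar_h\|_0$ for $c_{\mathrm{inv}}h^{-1}\|\bvar_h\|_0$. This costs one power of $h$ but retains the $L^2$ norm of $\be$, yielding $\|\btau_h^1\|_0\le C(\|\bu_t\|_{r-1}+K_1\|\bu\|_r)h^{r-1}$ with $K_1$ depending only on $\|\bu\|_{W^{1,\infty}}$, $|\Omega|$ and the mesh constants, and crucially not on $\nu$. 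Combining with $\gamma\ge 4L$ gives $(\gamma L)^{-1/2}\|\btau_h^1\|_0\le (C/L)(\|\bu_t\|_{r-1}+K_1\|\bu\|_r)h^{r-1}$, matching the two $L^{-1}$-weighted terms of the statement; collecting all four contributions then yields the asserted bound.
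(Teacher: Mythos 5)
Your proposal is correct and follows essentially the same route as the paper: comparison with the modified Stokes projection $\bs_h^m$, application of Lemma~\ref{lema_general} with $\overline\mu=1$ and $\overline k=1/\mu$, the identification of $\btau_h^1,\btau_h^2$, the observation that $\beta\ge 8L$ and \eqref{eq:as2} force $\gamma\ge 4L$ (hence $1/\gamma\le 1/(4L)$, $\beta/\gamma\le \beta/(4L)$), and the bounds \eqref{stokespro_mod} and \eqref{eq:L2p} for the individual truncation terms. The only deviation is your handling of the nonlinear consistency term via integration by parts plus the inverse inequality \eqref{inv}: the paper instead invokes Lemma~\ref{le:est-1}, bounding $|b_h(\bu,\bu,\bvar)-b_h(\bs_h^m,\bs_h^m,\bvar)|\le K_1(\bu,|\Omega|)\,\|\bu-\bs_h^m\|_1$ directly through $\|\bu\|_\infty$, $\|\bs_h^m\|_\infty$ and $\|\nabla\bs_h^m\|_{L^{2d/(d-1)}}$ with no inverse inequality; both yield the same $O(h^{r-1})$, $\nu$-independent contribution, though your constant depends on $\|\bu\|_{W^{1,\infty}}$ rather than on the Sobolev-norm products defining $K_1$ in \eqref{Cu}.
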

\begin{proof} Following \cite{grad-div1}, \cite{grad-div2} we compare $\bu_h$ with $\bs_h^m$, where~$\bs_h^m$ satisfies~\eqref{stokespro_mod_def}.
We first observe that the norms   in \eqref{eq:L_muno0} are bounded since for $\|\bs_h^m\|_\infty$ we apply  \eqref{cota_sh_inf_mu} and
applying
\eqref{cotainfty1} $\|\nabla \bs_h^m\|_\infty\le C \|\nabla \bu\|_\infty$.

Then, we apply Lemma~\ref{lema_general}
with $\bw_h=\bs_h^m$. We observe that 
\eqref{eq:wh} holds with
$$
(\btau_h^1,\bvar_h)=(\dot\bu-\dot \bs_h^m,\bvar_h)+b_h(\bu,\bu,\bvar_h)-b_h(\bs_h^m,\bs_h^m,\bvar_h),\quad \forall \bvar_h\in V_{h,r},
$$
and
$$
(\btau_h^2,\nabla \cdot\bvar_h)=(\pi_h p-p,\nabla \cdot \bvar_h)+\mu(\nabla \cdot (\bu-\bs_h^m),\nabla \cdot \bvar_h)
$$
and then  from \eqref{eq:muboth} we get
\begin{eqnarray*}
\|\be_h(t)\|_0^2&\le& e^{-\gamma t/2}\|\be_h(0)\|_0^2+\int_0^t e^{-\gamma(t-s)/2}\frac{ \|\btau_h^1\|_{0}^2}{L}~ds
\nonumber\\
&&\quad+\int_0^t e^{-\gamma(t-s)/2}\left( \beta c_0^2\|\bu(s)-\bs_h^m(s)\|_0^2+\frac{ 2}{\mu} \|\tau_h^2\|_0^2\right)~ds.
\end{eqnarray*}
Consequently,
\begin{align*}
\|\be_h(t)\|_0^2\le e^{-\gamma t/2}\|\be_h(0)\|_0^2+\frac{2}{\gamma L}\max_{0\le \tau\le t}\|\tau_h^1(\tau)\|_{0}^2
+&2 c_0^2\frac{\beta}{\gamma}\max_{0\le \tau\le t}\|\bu(\tau)-\bs_h^m(\tau)\|_0^2\nonumber\\
&+\frac{4}{\mu\gamma}\max_{0\le \tau\le t}\|\btau_h^2(\tau)\|_0^2.
\end{align*}
From \eqref{eq:as2} and \eqref{eq:gamma} and taking into account that we are assuming $\beta\ge 8L$ we get
$
{1}/{\gamma}\le \max\left({2}/{\beta},{1}/{(4L)}\right)={1}/{(4L)}$ and ${\beta}/{\gamma}\le \max\left(2,{\beta}/{(4L)}\right)
={\beta}/{(4L)}.
$
Then,  it follows that
\begin{align}\label{eq:final_muno0}
\|\be_h(t)\|_0^2\le e^{-\gamma t/2}\|\be_h(0)\|_0^2+\frac{1}{2L^2}&\max_{0\le \tau\le t}\|\tau_h^1(\tau)\|_{0}^2
+\frac{\beta}{2L} c_0^2\max_{0\le \tau\le t}\|\bu(\tau)-\bs_h^m(\tau)\|_0^2\nonumber\\
&+\frac{1}{\mu L}\max_{0\le \tau\le t}\|\btau_h^2(\tau)\|_0^2.
\end{align}
To bound the second term on the right-hand side of \eqref{eq:final_muno0} we apply \eqref{stokespro_mod} to get
$$
\max_{0\le \tau\le t}\|\bu(\tau)-\bs_h(\tau)\|_0^2\le C h^{2r}\max_{0\le  \tau\le t}\|\bu(\tau)\|_r^2.
$$
For the first term in the truncation error $\btau_h^1$ we apply \eqref{stokespro_mod} again to get
$$
\max_{0\le \tau\le t}\|\bu_t(\tau)- \dot\bs_h(\tau)\|_0^2\le C h^{2(r-1)}\max_{0\le \tau\le t}\| \bu_t( \tau)\|_{r-1}^2.
$$
For the second term in the truncation error $\btau_h^1$, applying~Lemma~\ref{le:est-1} below
we have
\begin{align*}
\sup_{\|\bvar\|_0=1}|b_h(\bu,\bu,\bvar)-b_h(\bs_h^m,\bs_h^m,\bvar)|&\le K_1(\bu,|\Omega|)\|\bu-\bs^m_h\|_1
\\
&\le CK_1(\bu,|\Omega|)h^{r-1}\|\bu\|_{r},
\end{align*}
where in the last inequality we have applied \eqref{stokespro_mod}.
Finally, from \eqref{eq:L2p} and \eqref{stokespro_mod} we obtain
\begin{eqnarray*}
\|\btau_h^2\|_0\le C h^{r-1}\|p\|_{H^{r-1}/{\mathbb R}}+C\mu h^{r-1}\|\bu\|_r,
\end{eqnarray*}
which concludes the proof.
\end{proof}
\begin{remark}\label{re:proy}\rm Some works in the literature \cite{Larios_et_al}, \cite{Mondaini_Titi}, use $\beta(I_H (\bu_h-\bu),\bvar_h)$ as nudging term
instead of the one in~\eqref{eq:method}, which is also used in~\cite{Rebholz-Zerfas}. In the case where $I_H$ is the orthogonal projection in~$L^2$, since $\beta(I_H (\bu_h-\bu),\bvar_h)=\beta(I_H (\bu_h-\bu),I_H \bvar_h)$ the analysis presented above  obviously covers both nudging terms.
\end{remark}

\begin{remark}\label{re:nu_large}\rm By adding ${}+\mu(\nabla\cdot\bs_h,\nabla\cdot
\bvar)$ to the left hand side of the first equation in~\eqref{stokesnew}, and repeating
the arguments in the proof of Theorem~\ref{Th:main} (with obvious changes), one
can obtain an $O(h^s)$ error bound also when~$\mu>0$, but where,
as in~Theorem~\ref{Th:main} and opposed to~Theorem~\ref{Th:main_muno0}, error constants depend on inverse powers of~$\nu$ and, hence, are useful in practice only when $\nu$ is not too small (see Fig.~\ref{fig1} below).
\end{remark}
\begin{lema}\label{le:est-1} The following bounds hold
\begin{eqnarray}
\label{eq:K_0}
\sup_{\|\bvar\|_1=1}| b_h(\bu,\bu,\bvar)-b_h(\bs_h,\bs_h,\bvar)| &\le& K_0(\bu,p,|\Omega|) \|\bu-\bs_h\|_0,
\\
\sup_{\|\bvar\|_0=0}| b_h(\bu,\bu,\bvar)-b_h(\bs_h^m,\bs_h^m,\bvar)| &\le& K_1(\bu,|\Omega|) \|\bu-\bs_h^m\|_1,
\label{eq:K_1}
\end{eqnarray}
where
\begin{equation}
\label{Cup}
K_0(\bu,p,|\Omega|)\!=\!C\Bigl( K_1(\bu,|\Omega|)
+N_1(\bu,p)^{1/2}\bigl(N_{d-1}(\bu,p)+
|\Omega|^{(3-d)/d}N_{2}(\bu,p)\bigr)^{1/2}\Bigr),
\end{equation}
\begin{equation}
\label{Cu}
K_1(\bu,|\Omega|) \!=\!C\Bigl((\|\bu\|_{d-2}\|\bu\|_2)^{1/2} + |\Omega|^{(3-d)/(2d)}(\|\bu\|_1\|\bu\|_2)^{1/2}\Bigl),
\end{equation}
and $N_j(\bu,p)$ is the quantity in~\eqref{eq:N_j}.
\end{lema}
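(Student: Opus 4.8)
The plan is to exploit that $b_h(\cdot,\cdot,\bvar)$ is bilinear in its first two arguments, so that with $\be=\bu-\bs_h$ (resp.\ $\be^m=\bu-\bs_h^m$) one writes
\[
b_h(\bu,\bu,\bvar)-b_h(\bs_h,\bs_h,\bvar)=b_h(\be,\bu,\bvar)+b_h(\bs_h,\be,\bvar),
\]
and similarly with $\bs_h^m$. Each summand is linear in the projection error, and the whole game is to distribute the derivatives in $b_h$ so that the error appears in exactly the norm dictated by the claimed inequality. The essential difference between \eqref{eq:K_0} and \eqref{eq:K_1} is the norm available on the test function: in \eqref{eq:K_1} only $\|\bvar\|_0$ is at our disposal, so no derivative may fall on $\bvar$ and the error must be measured in $H^1$; in \eqref{eq:K_0} we may spend $\|\bvar\|_1$, which is precisely what lets the weakest norm $\|\bu-\bs_h\|_0$ appear.

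For \eqref{eq:K_1} I would keep the definition of $b_h$ as it stands and bound the four resulting integrals by H\"older, always placing $\bvar$ in $L^2$. In $b_h(\be^m,\bu,\bvar)$ the convective part is estimated by $\|\be^m\|_{L^{2d}}\|\nabla\bu\|_{L^{2d/(d-1)}}\|\bvar\|_0$; the embedding $H^1\hookrightarrow L^{2d}$ from \eqref{sob1} turns $\|\be^m\|_{L^{2d}}$ into $\|\be^m\|_1$, while \eqref{eq:parti_ineq} applied to $\nabla\bu$ produces the factor $(\|\bu\|_1\|\bu\|_2)^{1/2}$, i.e.\ the second term of $K_1$ in \eqref{Cu}. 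The divergence part of this summand, together with the convective part of $b_h(\bs_h^m,\be^m,\bvar)$, is controlled by $\|\bu\|_\infty$ (resp.\ $\|\bs_h^m\|_\infty$) times $\|\be^m\|_1\|\bvar\|_0$, and Agmon's inequality \eqref{eq:agmon} rewrites $\|\bu\|_\infty$ as $(\|\bu\|_{d-2}\|\bu\|_2)^{1/2}$, the first term of $K_1$; the modified Stokes stability bound quoted above replaces $\|\bs_h^m\|_\infty$ by $C\|\bu\|_\infty$. The only delicate summand is $\tfrac12((\nabla\cdot\bs_h^m)\be^m,\bvar)$: since $\bs_h^m$ is only discretely divergence free, I would use $\nabla\cdot\bu=0$ to write $\nabla\cdot\bs_h^m=-\nabla\cdot\be^m$ and bound it by $\tfrac12\|\be^m\|_1\|\be^m\|_\infty\|\bvar\|_0$, again absorbing $\|\be^m\|_\infty\le C\|\bu\|_\infty$. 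No inverse power of $\nu$ enters anywhere, which is the point of this estimate.

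For \eqref{eq:K_0} the novelty is that $\be$ must end up undifferentiated in $L^2$. In $b_h(\be,\bu,\bvar)$ the convective term already carries $\be$ without a derivative; the divergence term $\tfrac12((\nabla\cdot\be)\bu,\bvar)$ I would integrate by parts (using $\be\in H^1_0$) to move the derivative onto $\bu\cdot\bvar$, obtaining $-\tfrac12(\be,\nabla(\bu\cdot\bvar))$, bounded by $C(\|\nabla\bu\|_{L^d}+\|\bu\|_\infty)\|\be\|_0\|\bvar\|_1$ through \eqref{sob1}. In $b_h(\bs_h,\be,\bvar)$ I would instead invoke the skew-symmetry \eqref{skew} to pass to $-b_h(\bs_h,\bvar,\be)$, so that again $\be$ is undifferentiated; the two resulting terms are bounded by $\|\bs_h\|_\infty\|\bvar\|_1\|\be\|_0$ and by $C\|\nabla\bs_h\|_{L^{2d/(d-1)}}\|\bvar\|_1\|\be\|_0$ (for $d=2$ the latter reduces to $H^1$ stability). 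The coefficient norms $\|\bs_h\|_\infty$ and $\|\nabla\bs_h\|_{L^{2d/(d-1)}}$ are then replaced, via the pointwise and $L^{2d/(d-1)}$ stability bounds for the Stokes projection recorded above, by the quantities $N_j(\bu,p)$ of \eqref{eq:N_j}; this is where the pressure enters and yields the factor $N_1(\bu,p)^{1/2}\bigl(N_{d-1}(\bu,p)+|\Omega|^{(3-d)/d}N_2(\bu,p)\bigr)^{1/2}$ of \eqref{Cup}. The Lebesgue exponents and $|\Omega|$-powers are chosen dimension by dimension, which accounts for $\|\bu\|_{d-2}$ and the scale factors.

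I expect the main obstacle to be the bookkeeping in \eqref{eq:K_0}: forcing the error into its weakest norm $\|\bu-\bs_h\|_0$ requires that no derivative ever touch $\be$, which dictates the simultaneous use of integration by parts in $b_h(\be,\bu,\cdot)$ and of skew-symmetry in $b_h(\bs_h,\be,\cdot)$, and then demands mixed $L^p$ control of $\bs_h$ that only the sharp, interpolated Stokes-projection stability estimates can supply, bringing in the pressure and the precise combination of $N_j$'s. A secondary but conceptually important point is the divergence term in \eqref{eq:K_1}: the identity $\nabla\cdot\bs_h^m=-\nabla\cdot(\bu-\bs_h^m)$ is exactly what keeps $\|\nabla\bu\|_\infty$ (absent from $K_1$) and all inverse powers of $\nu$ out of the bound.
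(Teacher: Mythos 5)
Your argument is essentially the paper's own: the paper simply delegates the splitting-plus-H\"older step to a cited result (Lemma~5 of~\cite{proyNS}), which yields exactly the two-term bound you derive by hand, namely $C\bigl(\|\nabla\bu\|_{L^{2d/(d-1)}}+\|\nabla\bs_h\|_{L^{2d/(d-1)}}\bigr)\|\bu-\bs_h\|_0\|\bvar\|_{L^{2d}}+\bigl(\|\bu\|_\infty+\|\bs_h\|_\infty\bigr)\|\bu-\bs_h\|_0\|\nabla\bvar\|_0$, and then, as you do, converts the continuous norms via~\eqref{eq:parti_ineq} and~\eqref{eq:agmon}, the test-function norm via~\eqref{sob1}, and the discrete norms via Lemmas~\ref{le:est_sh_inf} and~\ref{le:est_sh_inf_mu}, using skew-symmetry for~\eqref{eq:K_1} to trade the roles of the error and the test function. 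One small slip: in your integration by parts for~\eqref{eq:K_0} the pairing $\|\be\|_0\|\nabla\bu\|_{L^{d}}\|\bvar\|_{L^{2d/(d-2)}}$ fails for $d=2$, since then $2d/(d-2)=\infty$ and $\|\bvar\|_1$ does not control $\|\bvar\|_\infty$; the fix is the pairing $\|\nabla\bu\|_{L^{2d/(d-1)}}\|\be\|_0\|\bvar\|_{L^{2d}}$ that you already use for the $b_h(\bs_h,\bvar,\be)$ terms, which works in both dimensions and is what produces the $|\Omega|^{(3-d)/(2d)}$ factor in~\eqref{Cup}. Your detour through $\nabla\cdot\bs_h^m=-\nabla\cdot(\bu-\bs_h^m)$ in~\eqref{eq:K_1} is harmless but unnecessary, since $\|\nabla\bs_h^m\|_{L^{2d/(d-1)}}$ is already bounded $\nu$-uniformly by~\eqref{la_cota_mu}.
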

\begin{proof}
Applying~\cite[Lemma~5]{proyNS} we have
\begin{align*}
| b_h(\bu,\bu,\bvar)-b_h(\bs_h,\bs_h,&\bvar)| \\
{}\le&  C\bigl(\|\nabla \bu\|_{L^{2d/(d-1)}} + \|\nabla \bs_h\|_{L^{2d/(d-1)}}\bigr)\|\bu -\bs_h\|_0\|\bvar\|_{L^{2d}}
\nonumber\\
&{}+\bigl(\|\bu\|_\infty+\|\bs_h\|_\infty\bigr)\|\bu -\bs_h\|_0\|\nabla \bvar\|_0.
\end{align*}
To bound~$\|\nabla\bu\|_{L^{2d/(d-1)}}$ and~$\|\bu\|_\infty$ we apply \eqref{eq:parti_ineq} and~\eqref{eq:agmon}, respectively,
and applying~Sobolev's inequality~\eqref{sob1} we have~$\|\bvar\|_{L^{2d}}\le c_1|\Omega|^{(3-d)/(2d)}\|\bvar\|_1$.
The proof of~(\ref{eq:K_0}) is finished by applying Lemma~\ref{le:est_sh_inf} below.

To prove~(\ref{eq:K_1}), we replace~$\bs_h$ by~$\bs_h^m$ in the arguments above, and use the skew-symmetric property of~$b$ to interchange the roles of~$\varphi$ and~$\bu -
\bs_h$. We finish by applying~Lemma~\ref{le:est_sh_inf_mu} below.
\end{proof}
\begin{lema}\label{le:est_sh_inf} There exist a positive constant~$C_0$ such that the following bounds hold
\begin{align}
\label{cota_sh_inf}
\|\bs_h\|_\infty  & \le C_0\left((\|\bu\|_{d-2}\|\bu\|_2)^{1/2} +\bigl(N_1(\bu,p)N_{d-1}(\bu,p)\bigr)^{1/2}\right)
\\
\label{la_cota}
\|\nabla\bs_h\|_{L^{2d/(d-1)}} & \le
 C_0\bigl(N_1(\bu,p)N_2(\bu,p)\bigr)^{1/2}
\end{align}
\end{lema}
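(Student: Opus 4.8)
The plan is to prove both bounds by comparing $\bs_h$ with a finite element interpolant $I_h\bu\in X_{h,r}$ of $\bu$, writing $\bs_h=I_h\bu-(I_h\bu-\bs_h)$. The point is that the ``smooth'' part $I_h\bu$ inherits the continuous (interpolation) inequalities applied to $\bu$, giving the terms in~\eqref{Cup} and~\eqref{Cu} that involve only Sobolev norms of $\bu$, whereas the discrete remainder $\boldsymbol{\chi}_h:=I_h\bu-\bs_h\in (S_{h,r})^d$ is a genuine finite element function, so that the inverse estimate~\eqref{inv} applies to it. Since $\boldsymbol{\chi}_h$ is of high order in $h$, the negative powers of $h$ produced by the inverse estimate can be absorbed.

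For~\eqref{cota_sh_inf} I would first bound $\|I_h\bu\|_\infty\le C\|\bu\|_\infty$ (using $\bu\in H^2(\Omega)^d\hookrightarrow C(\overline\Omega)$ for $d\le3$ and the $L^\infty$-stability of the interpolant) and then apply Agmon's inequality~\eqref{eq:agmon} to get $\|I_h\bu\|_\infty\le C(\|\bu\|_{d-2}\|\bu\|_2)^{1/2}$, which is the first term of~\eqref{Cup}. For the remainder I would use the inverse estimate $\|\boldsymbol{\chi}_h\|_\infty\le c_{\mathrm{inv}}h^{-d/2}\|\boldsymbol{\chi}_h\|_0$ obtained from~\eqref{inv} with $m=n=0$, $q=2$, $p=\infty$. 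The key step is then to bound $\|\boldsymbol{\chi}_h\|_0$ by exactly $h^{d/2}$ times the desired factor: since $\|\boldsymbol{\chi}_h\|_0\le\|\bu-I_h\bu\|_0+\|\bu-\bs_h\|_0$, I would estimate each summand by taking the geometric mean of the order $j=1$ and the order $j=d-1$ versions of~\eqref{stokespro}, namely from $\|\bu-\bs_h\|_0\le CN_1(\bu,p)h$ and $\|\bu-\bs_h\|_0\le CN_{d-1}(\bu,p)h^{d-1}$ one gets $\|\bu-\bs_h\|_0\le Ch^{d/2}(N_1(\bu,p)N_{d-1}(\bu,p))^{1/2}$, and analogously for the interpolation error. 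Multiplying by $h^{-d/2}$ produces exactly the second term of~\eqref{Cup}.

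The proof of~\eqref{la_cota} would follow the same three steps, now with the pair of exponents $j=1,2$. I would split off $\|\nabla\bu\|_{L^{2d/(d-1)}}\le c_1(\|\bu\|_1\|\bu\|_2)^{1/2}\le c_1(N_1(\bu,p)N_2(\bu,p))^{1/2}$ via~\eqref{eq:parti_ineq}, apply~\eqref{inv} with $m=n=1$, $q=2$, $p=2d/(d-1)$ (whose exponent computes to exactly $-1/2$) to obtain $\|\nabla\boldsymbol{\chi}_h\|_{L^{2d/(d-1)}}\le Ch^{-1/2}\|\boldsymbol{\chi}_h\|_1$, and finally use the geometric mean of $\|\bu-\bs_h\|_1\le CN_1(\bu,p)$ and $\|\bu-\bs_h\|_1\le CN_2(\bu,p)h$ (again from~\eqref{stokespro}) to get $\|\boldsymbol{\chi}_h\|_1\le Ch^{1/2}(N_1(\bu,p)N_2(\bu,p))^{1/2}$. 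The factors $h^{-1/2}$ and $h^{1/2}$ then cancel.

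The main obstacle is purely technical: matching the powers of $h$ so that the negative exponent coming from each inverse estimate is cancelled exactly by the positive exponent manufactured through the multiplicative (geometric mean) interpolation of~\eqref{stokespro}, and checking that the interpolation error terms carry the very same factors. A related subtlety is the choice of interpolant: the nodal estimate~\eqref{eq:cotainter_la} is borderline for $\|\nabla(\bu-I_h\bu)\|_{L^{2d/(d-1)}}$ when $d=3$ (there $p=2d/(d-1)=3$ and $n=1=d/p$), so in that case I would instead use a Cl\'ement/Scott--Zhang interpolant, which is stable in $W^{1,2d/(d-1)}$ and supplies $\|\nabla(\bu-I_h\bu)\|_{L^{2d/(d-1)}}\le C(N_1(\bu,p)N_2(\bu,p))^{1/2}$ unconditionally.
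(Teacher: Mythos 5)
Your proof of \eqref{cota_sh_inf} is essentially the paper's own: the same splitting $\bs_h=I_h\bu+(\bs_h-I_h\bu)$, Agmon's inequality \eqref{eq:agmon} for the interpolant part, the inverse estimate \eqref{inv} with factor $h^{-d/2}$ for the discrete remainder, and the geometric mean of the $j=1$ and $j=d-1$ cases of \eqref{stokespro} (together with the matching interpolation bounds) to cancel that factor; the powers all check out in both $d=2$ and $d=3$. For \eqref{la_cota}, however, you take a genuinely different route. The paper does not split off an interpolant there: it invokes the $W^{1,q}$-stability of the Stokes projection, $\|\nabla\bs_h\|_{L^q}\le C(\|\nabla\bu\|_{L^q}+\nu^{-1}\|p\|_{L^q})$ for $q=2$ and $q=\infty$ from \cite{chenSiam}, interpolates between these by Riesz--Thorin to get stability in $L^{2d/(d-1)}$, and then applies \eqref{eq:parti_ineq} to $\nabla\bu$ and to $p$, which is how the pressure contribution enters and produces $(N_1(\bu,p)N_2(\bu,p))^{1/2}$ rather than $(\|\bu\|_1\|\bu\|_2)^{1/2}$. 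Your alternative reuses the splitting/inverse-estimate/geometric-mean machinery with the exponent $-1/2$ from \eqref{inv} cancelled by the $h^{1/2}$ coming from the mean of the $j=1,2$ cases of \eqref{stokespro}; this is correct, yields the same bound (the pressure again enters only through the $N_j$ in \eqref{stokespro}), and is more self-contained in that it avoids the nontrivial max-norm stability result for the discrete Stokes projection, at the modest price of needing a $W^{1,2d/(d-1)}$-stable quasi-interpolant --- and you rightly flag that the Lagrange estimate \eqref{eq:cotainter_la} is borderline at $n=1=d/p$ when $d=3$, so switching to Scott--Zhang there is the correct fix. What the paper's route buys is independence from the choice of interpolant for that particular bound; what yours buys is uniformity of technique across the two estimates and no reliance on the $q=\infty$ stability of the Stokes projection.
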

\begin{proof}
For the $L^\infty$ bound, applying inverse inequality \eqref{inv}, we write
\begin{align*}
\|\bs_h\|_\infty &\le C\| I_h(\bu)\|_\infty + \|\bs_h-I_h(\bu)\|_\infty\\
&\le C\|\bu\|_\infty + c_{\rm inv} h^{-d/2} \bigl( \|\bs_h-\bu\|_0+\|\bu -I_h(\bu)\|_0\bigr),
\end{align*}
an apply \eqref{eq:agmon} to bound $\|\bu\|_\infty$.
In the case $d=2$ we have
$$
\|\bu -I_h(\bu)\|_0 \le Ch \|\bu\|_1\le Ch (\|\bu\|_0\|\bu\|_2)^{1/2} = Ch^{d/2}  (\|\bu\|_{d-2}\|\bu\|_2)^{1/2},
$$
where we have applied \eqref{eq:cotainter} for $H=h$ and also $\|\bu -I_h(\bu)\|_0 \le C h^2\|\bu\|_2$.
By \eqref{stokespro}
$$
\|\bs_h-\bu\|_0\le  CN_1(\bu,p)h.
$$
In the case $d=3$,
$$
\|\bu -I_h(\bu)\|_0 \le Ch^{3/2}   (\|\bu\|_1\|\bu\|_2)^{1/2} =Ch^{d/2}  (\|\bu\|_{d-2}\|\bu\|_2)^{1/2}.
$$
and
\begin{align*}
\|\bs_h-\bu\|_0 &
\le Ch^{d/2} (N_1(\bu,p)N_2(\bu,p))^{1/2}.
\end{align*}
For~$\|\nabla\bs_h\|_{L^{2d/(d-1)}}$, since $\|\nabla \bs_h\|_{L^q} \le C(\|\nabla \bu\|_{L^q}+\nu^{-1}\|p\|_{L^q})$, for $q=2,\infty$
\cite{chenSiam}, by the Riesz-Thorin interpolation theorem and applying \eqref{eq:parti_ineq}
\begin{align*}
\|\nabla\bs_h\|_{L^{2d/(d-1)}} &\le C(\|\nabla \bu\|_{L^{2d/(d-1)}}+\nu^{-1}\|p\|_{L^{2d/(d-1)}}
 \\ &{}
\le C((\|\bu\|_1\|\bu\|_2)^{1/2}+\nu^{-1}(\|p\|_0\|p\|_1)^{1/2})
\\
& {}
\le C\!\left(N_1(\bu,p)N_2(\bu,p)\right)^{1/2}.\ \
\end{align*}
\end{proof}
\begin{lema}\label{le:est_sh_inf_mu} There exist a positive constant~$C_1$ such that the following bounds hold
\begin{align}
\label{cota_sh_inf_mu}
\|\bs_h^m\|_\infty  & \le C_1(\|\bu\|_{d-2}\|\bu\|_2)^{1/2},
\\
\label{la_cota_mu}
\|\nabla\bs_h^m\|_{L^{2d/(d-1)}} & \le
 C_1\bigl(\|\bu\|_1\|\bu\|_2\bigr)^{1/2}.
\end{align}
\end{lema}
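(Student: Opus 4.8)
The plan is to follow the proof of Lemma~\ref{le:est_sh_inf} almost verbatim, but with the ordinary Stokes projection~$\bs_h$ replaced by the modified one~$\bs_h^m$, using its error bounds~\eqref{stokespro_mod} and~\eqref{cotainfty1} in place of~\eqref{stokespro} and the Chen-type gradient estimates involving~$\nu^{-1}\|p\|$. This substitution is exactly what allows us to drop both the pressure contribution and every inverse power of~$\nu$ from the final constant~$C_1$.

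For the $L^\infty$ bound~\eqref{cota_sh_inf_mu} I would first split, with~$I_h\bu$ the interpolant of~$\bu$,
\[
\|\bs_h^m\|_\infty\le \|I_h\bu\|_\infty+\|\bs_h^m-I_h\bu\|_\infty\le C\|\bu\|_\infty+c_{\rm inv}h^{-d/2}\bigl(\|\bu-\bs_h^m\|_0+\|\bu-I_h\bu\|_0\bigr),
\]
where I have used the $L^\infty$-stability of the interpolant together with the inverse inequality~\eqref{inv} (with $m=n=0$, $p=\infty$, $q=2$) applied to the discrete function~$\bs_h^m-I_h\bu$. Agmon's inequality~\eqref{eq:agmon} bounds~$\|\bu\|_\infty$ by~$(\|\bu\|_{d-2}\|\bu\|_2)^{1/2}$, already of the desired form. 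For the two $L^2$ terms I would combine the two extreme estimates: from~\eqref{eq:cotainter} (with $H=h$) and~\eqref{eq:cotainter_la} one gets $\|\bu-I_h\bu\|_0\le Ch\|\bu\|_1$ and $\|\bu-I_h\bu\|_0\le Ch^2\|\bu\|_2$, and likewise~\eqref{stokespro_mod} gives $\|\bu-\bs_h^m\|_0\le Ch\|\bu\|_1$ and $\le Ch^2\|\bu\|_2$. Taking the first estimate directly for~$d=2$ (with $\|\bu\|_1\le(\|\bu\|_0\|\bu\|_2)^{1/2}$ and $\|\bu\|_0=\|\bu\|_{d-2}$) and the geometric mean of the two for~$d=3$ yields in both cases the bound $Ch^{d/2}(\|\bu\|_{d-2}\|\bu\|_2)^{1/2}$, so the factor~$h^{-d/2}$ cancels and~\eqref{cota_sh_inf_mu} follows.

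For the gradient bound~\eqref{la_cota_mu} I would establish the two endpoint estimates $\|\nabla\bs_h^m\|_{L^q}\le C\|\nabla\bu\|_{L^q}$ for $q=2$ and $q=\infty$, both with $\nu$-independent~$C$: the case $q=\infty$ is immediate from $\|\nabla\bs_h^m\|_\infty\le\|\nabla\bu\|_\infty+\|\nabla(\bu-\bs_h^m)\|_\infty$ together with~\eqref{cotainfty1}, while $q=2$ follows from~\eqref{stokespro_mod} with $j=1$ and the triangle inequality. Interpolating these by the Riesz--Thorin theorem, exactly as in Lemma~\ref{le:est_sh_inf}, gives $\|\nabla\bs_h^m\|_{L^{2d/(d-1)}}\le C\|\nabla\bu\|_{L^{2d/(d-1)}}$ (note $2d/(d-1)\in[2,\infty]$ for $d\in\{2,3\}$), and a final application of~\eqref{eq:parti_ineq} to~$\nabla\bu$, namely $\|\nabla\bu\|_{L^{2d/(d-1)}}\le c_1\|\nabla\bu\|_0^{1/2}\|\nabla\bu\|_1^{1/2}\le c_1(\|\bu\|_1\|\bu\|_2)^{1/2}$, closes the argument.

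I expect the main obstacle to be the bookkeeping of powers of~$h$ in the $L^\infty$ estimate: one must produce exactly the factor~$h^{d/2}$ that matches the inverse inequality, uniformly in $d\in\{2,3\}$, which for~$d=3$ forces the geometric-mean interpolation of the two bounds from~\eqref{stokespro_mod} rather than the use of a single one. The only genuinely new structural ingredient, compared with Lemma~\ref{le:est_sh_inf}, is that the endpoint estimate for~$q=\infty$ rests on~\eqref{cotainfty1}, whose constant is independent of~$\nu$; this is precisely what removes the~$\nu^{-1}\|p\|_{L^q}$ term present in the ordinary Stokes case and delivers the pressure-free, $\nu$-uniform constant~$C_1$.
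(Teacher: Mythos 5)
Your proposal is correct and follows exactly the route the paper takes: its proof of Lemma~\ref{le:est_sh_inf_mu} consists precisely of repeating the argument of Lemma~\ref{le:est_sh_inf} with \eqref{stokespro} replaced by \eqref{stokespro_mod} (and, for the gradient endpoint $q=\infty$, with \eqref{cotainfty1} in place of the Chen-type bound), which is what you do in full detail. Your bookkeeping of the $h^{d/2}$ factors and the observation that the modified projection removes both the pressure term and the inverse powers of~$\nu$ match the paper's intent.
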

\begin{proof}
We  argue exactly as in the proof of Lemma~\ref{le:est_sh_inf} replacing \eqref{stokespro} by \eqref{stokespro_mod}.
\end{proof}
\begin{remark}\label{re:beta_cond}\rm
In the case $\mu=0$, according to~Lemma~\ref{le:est_sh_inf}, we have that $\beta\ge 8L$ when $\bw_h=\bs_h$ if for $t\ge 0$,
\begin{equation}\label{eq:as1_u}
\beta \ge 8\left(2\frac{\left(\textcolor{black}{(\hat c_1} C_0)^2N_1(\bu,p)N_2(\bu,p)\right)^{d/3}}{\nu^{(2d-3)/3}}+C_0^2\frac{
\|\bu\|_{d-2}\|\bu\|_2+N_1(\bu,p)N_{d-1}(\bu,p)}{\nu}\right),
\end{equation}
with $C_0$ the constant in~Lemma~\ref{le:est_sh_inf}.
In case $\mu\neq 0$ from \eqref{cotainfty1} and \eqref{cota_sh_inf_mu} we have that $\beta\ge 8L$  when $\bw_h=\bs_h^m$ if for $t\ge 0$
\begin{equation}\label{eq:as1_muno0_u}
\beta\ge 16\left(C_1\|\nabla \bu\|_\infty+C_1^2\frac{
\|\bu\|_{d-2}\|\bu\|_2}{4\mu} 
\right).
\end{equation}
\end{remark}
\subsection{The Lagrange interpolant}\label{sub_la}

In this section we consider when $I_H \bu=I_H^{La} \bu$.
With the help of the following lemmas we will show that
the analogous of Theorems~\ref{Th:main} and~\ref{Th:main_muno0} (Theorem~\ref{Th:main_la} below) also holds in this case.
\begin{lema}
Let $\bv_h\in X_{h,r}$ then the following bound holds
\begin{equation}\label{eq:cotainter_la2}
\|\bv_h-I_H^{La}\bv_h\|_0\le c_{La} H\|\nabla \bv_h\|_0,
\end{equation}
where
\begin{equation}\label{la_c_La}
c_{La}= C\left({H}/{h}\right)^{\frac{d(p-2)}{2p}},
\end{equation}
where $C$ is a generic constant and $p=3$ if $d=2$ and $p=4$ if $d=3$.
\end{lema}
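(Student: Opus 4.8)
The plan is to prove \eqref{eq:cotainter_la2} by localizing to the coarse mesh and exploiting that $\bv_h\in X_{h,r}$ is globally continuous and piecewise polynomial, so that its restriction to each coarse element $K$ (of diameter of order $H$) belongs to $W^{1,p}(K)$ for every $p$. This regularity is exactly what makes the nodal interpolation estimate applicable, but the catch is that \eqref{eq:cotainter_la} requires $n>d/p$, so with derivative order $n=1$ we are forced to work with an exponent $p>d$. This is the origin of the choices $p=3$ for $d=2$ and $p=4$ for $d=3$ (the smallest integers exceeding $d$), and ultimately of the loss $(H/h)^{d(p-2)/(2p)}$ recorded in \eqref{la_c_La}.

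First I would apply \eqref{eq:cotainter_la} on a single coarse element $K$ with $m=0$, $n=1$ and $p>d$, obtaining $\|\bv_h-I_H^{La}\bv_h\|_{L^p(K)}\le c_{\rm int} H\,\|\nabla\bv_h\|_{L^p(K)}$. Since $p\ge 2$ and $|K|$ is of order $H^d$, Hölder's inequality on the bounded element gives $\|w\|_{L^2(K)}\le |K|^{1/2-1/p}\|w\|_{L^p(K)}$, producing the factor $H^{d(1/2-1/p)}=H^{d(p-2)/(2p)}$. Thus $\|\bv_h-I_H^{La}\bv_h\|_{L^2(K)}\le C\,H^{1+d(p-2)/(2p)}\,\|\nabla\bv_h\|_{L^p(K)}$.

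Next I would convert the $L^p$ norm of the gradient back to an $L^2$ norm using the inverse inequality \eqref{inv} on the fine mesh. Applying the local inverse estimate (with $m=n=0$, $q=2$) to the components of the piecewise polynomial $\nabla\bv_h$ on each fine element $\tau\subset K$ yields $\|\nabla\bv_h\|_{L^p(\tau)}\le c_{\rm inv}\, h^{-d(1/2-1/p)}\|\nabla\bv_h\|_{L^2(\tau)}$. Raising to the $p$-th power, summing over the fine elements contained in $K$, and using that for $p\ge 2$ the $\ell^p$ norm is dominated by the $\ell^2$ norm (that is, $\sum_\tau a_\tau^p\le(\sum_\tau a_\tau^2)^{p/2}$), I obtain $\|\nabla\bv_h\|_{L^p(K)}\le C\, h^{-d(p-2)/(2p)}\|\nabla\bv_h\|_{L^2(K)}$. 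Combining with the previous step gives, on each coarse element, $\|\bv_h-I_H^{La}\bv_h\|_{L^2(K)}\le C\,(H/h)^{d(p-2)/(2p)}\,H\,\|\nabla\bv_h\|_{L^2(K)}$.

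Finally I would square this elementwise bound and sum over the coarse elements $K$; since the constant and the power of $H$ and $H/h$ are independent of $K$, the sums $\sum_K\|\cdot\|_{L^2(K)}^2$ reassemble into the global $L^2(\Omega)$ norms, yielding \eqref{eq:cotainter_la2} with $c_{La}$ as in \eqref{la_c_La}. The two delicate points are the ones already flagged: justifying the applicability of \eqref{eq:cotainter_la} to the merely piecewise-smooth $\bv_h$ (handled by its global $H^1$-continuity, which is what forces $p>d$), and the $\ell^p$-versus-$\ell^2$ summation over the fine elements inside each coarse element, which tacitly uses that the computational fine mesh refines the coarse observation mesh so that the decomposition $K=\bigcup_{\tau\subset K}\tau$ is available. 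I expect the interplay of these two constraints, rather than any single estimate, to be the main conceptual obstacle.
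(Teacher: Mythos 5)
Your proposal is correct and follows essentially the same route as the paper's proof: H\"older's inequality on each coarse element to pass between $L^2(K)$ and $L^p(K)$ with the factor $H^{d(p-2)/(2p)}$, the nodal interpolation estimate \eqref{eq:cotainter_la} with $n=1$ and $p>d$, the inverse inequality to return from $L^p$ to $L^2$ at the cost of $h^{-d(p-2)/(2p)}$, and summation over the coarse mesh. Your version is in fact slightly more careful than the paper's at one point, namely in justifying the inverse estimate on a coarse element by applying \eqref{inv} fine-element by fine-element and using the $\ell^p$--$\ell^2$ embedding (the paper applies the inverse inequality on $K$ directly, tacitly assuming the same nesting of the meshes you flag).
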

\begin{proof}
For $\bv_h\in X_{h,r}$ we write
\begin{equation}
\| \bv_h - I_H^{La}\bv_h\|^2_0 = \sum_{K\in T_H} \| \bv_h - I_H^{La}\bv_h\|_{L^2(K)}^2 \le
 C\sum_{K\in T_H} H^{\frac{d(p-2)}{p}}\| \bv_h - I_H^{La}\bv_h\|_{L^p(K)}^2,
 \label{primera}
 \end{equation}
 the last inequality being a consequence of H\"older's inequality and of the fact that $| K| \le CH^d$.
 Applying \eqref{eq:cotainter_la} and \eqref{inv} we get
$$
\| \bv_h - I_H^{La}\bv_h\|_{L^p(K)} \le c_{\rm int} H\|\nabla\bv_h\|_{L^p(K)}\le c_{\rm int}c_{\rm inv}H\|\nabla\bv_h\|_{L^2(K)}h^{-\frac{d(p-2)}{2p}},
$$
so that inserting the above inequality into \eqref{primera} we reach \eqref{eq:cotainter_la2}.
\end{proof}
\begin{lema}\label{le:(I-I_h)(u-s_h)} Let $\bs_h$ be the Stokes projection defined in \eqref{stokesnew}. Then the following bound holds
\begin{equation}\label{eq:trun_la_stokes}
\|(I-I_H^{La})(\bs_h-\bu)\|_0\le C H^2 h^{r-2}\|\bu\|_r,
\end{equation}
\end{lema}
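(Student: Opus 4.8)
The plan is to introduce the fine Lagrange interpolant $I_h^{La}\bu\in X_{h,r}$ as an intermediary and split
$$\bs_h-\bu=(\bs_h-I_h^{La}\bu)+(I_h^{La}\bu-\bu),$$
so that, by linearity of $I_H^{La}$,
$$(I-I_H^{La})(\bs_h-\bu)=(I-I_H^{La})(\bs_h-I_h^{La}\bu)+(I-I_H^{La})(I_h^{La}\bu-\bu).$$
The first term is the coarse interpolation error of the \emph{finite element} function $\bphi_h:=\bs_h-I_h^{La}\bu\in X_{h,r}$, which is exactly the setting of~\eqref{eq:cotainter_la2}; the second term is the coarse interpolation error of the \emph{fine} interpolation error of the smooth function~$\bu$, and I will handle it through a cancellation between the two Lagrange operators. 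The key reason the exponent $h^{r-2}$ appears (rather than the naive $h^{r-1}$ one would get from an $H^1$ Lagrange estimate) is that, since $\bphi_h$ is only piecewise smooth, the bound~\eqref{eq:cotainter_la2} carries the factor $c_{La}=C(H/h)^{d(p-2)/(2p)}$ from~\eqref{la_c_La}, i.e.\ one pays one extra inverse power of $h$.

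For the first term, apply~\eqref{eq:cotainter_la2} to $\bphi_h$ to obtain $\|(I-I_H^{La})\bphi_h\|_0\le c_{La}H\|\nabla\bphi_h\|_0$. Writing $\nabla\bphi_h=\nabla(\bs_h-\bu)+\nabla(\bu-I_h^{La}\bu)$ and bounding the first summand by~\eqref{stokespro} (with $j=r$) and the second by the fine interpolation estimate~\eqref{eq:cotainter_la} (with $m=1$, $n=r$, $p=2$), I get $\|\nabla\bphi_h\|_0\le C h^{r-1}\|\bu\|_r$ (more precisely $C N_r(\bu,p)h^{r-1}$ through~\eqref{stokespro}). Since $0\le d(p-2)/(2p)\le 1$ for $d\in\{2,3\}$ and $h\le H$, we have $c_{La}\le C(H/h)$, hence
$$\|(I-I_H^{La})\bphi_h\|_0\le C\frac{H}{h}\,H\,h^{r-1}\|\bu\|_r=CH^2h^{r-2}\|\bu\|_r,$$
which is the desired order.

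For the second term I would use that the coarse Lagrange interpolation points are a subset of the fine ones, so that $I_h^{La}\bu$ and $\bu$ agree at every coarse node and therefore $I_H^{La}(I_h^{La}\bu-\bu)=I_H^{La}I_h^{La}\bu-I_H^{La}\bu=I_H^{La}\bu-I_H^{La}\bu=0$. Consequently $(I-I_H^{La})(I_h^{La}\bu-\bu)=I_h^{La}\bu-\bu$, and $\|I_h^{La}\bu-\bu\|_0\le Ch^r\|\bu\|_r$ by~\eqref{eq:cotainter_la} (with $m=0$, $n=r$, $p=2$); since $h\le H$ this is $\le CH^2h^{r-2}\|\bu\|_r$, so it is absorbed into the bound from the first term, and adding the two contributions yields~\eqref{eq:trun_la_stokes}. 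I expect the delicate point to be precisely this second term: controlling $I_H^{La}$ applied to the fine interpolation error. The cancellation above is clean when the interpolation nodes are nested; if they are not, one must instead estimate $\|I_H^{La}(I_h^{La}\bu-\bu)\|_0$ through the pointwise (hence $L^\infty$) size of the fine interpolation error, which degrades the estimate, so pinning down the right compatibility hypothesis on the two meshes is the crux of the argument.
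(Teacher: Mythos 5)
Your decomposition and your treatment of the first term coincide with the paper's: apply \eqref{eq:cotainter_la2} to $\bs_h-I_h^{La}\bu\in X_{h,r}$, bound $\|\nabla(\bs_h-I_h^{La}\bu)\|_0\le Ch^{r-1}\|\bu\|_r$ via \eqref{stokespro} and \eqref{eq:cotainter_la}, and majorize $(H/h)^{d(p-2)/(2p)}$ by $H/h$. That part is fine.

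The gap is in the second term. Your main argument assumes that the coarse Lagrange nodes are a subset of the fine ones, so that $I_H^{La}(I_h^{La}\bu-\bu)=0$. That hypothesis appears nowhere in the lemma or in the paper: the only compatibility between the two meshes ever invoked is that $H/h$ stays bounded (and that is used in Theorem~\ref{Th:main_la}, not here). You correctly flag this as the delicate point, but your fallback (estimating $\|I_H^{La}(I_h^{La}\bu-\bu)\|_0$ through the $L^\infty$ size of the fine interpolation error) is not what is needed and, as you note, degrades the rate. The paper closes this without any nestedness: argue elementwise on the coarse mesh as in \eqref{primera}, using H\"older's inequality and $|K|\le CH^d$ to pass from $L^2(K)$ to $L^p(K)$ at the cost of $H^{d(p-2)/(2p)}$, then apply \eqref{eq:cotainter_la} \emph{for the coarse interpolant} (with $m=0$, $n=1$, in $L^p$ with $p=3$ for $d=2$ and $p=4$ for $d=3$, so that $n>d/p$ holds) to get
\begin{equation*}
\|(I-I_H^{La})(I_h^{La}\bu-\bu)\|_{L^p(K)}\le c_{\rm int}H\|\nabla(I_h^{La}\bu-\bu)\|_{L^p(K)}\le CHh^{r-2}|\bu|_{r-1,p,K},
\end{equation*}
where the last step is \eqref{eq:cotainter_la} for the fine interpolant in $W^{1,p}$. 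Finally the Sobolev embedding \eqref{sob1} with $s=1$ on each coarse element gives $|\bu|_{r-1,p,K}\le CH^{1-d(p-2)/(2p)}\|\bu\|_{r,2,K}$, and summing over $K\in\mathcal T_H$ the powers of $H$ combine to yield exactly $CH^2h^{r-2}\|\bu\|_r$. Note this route produces $H^2h^{r-2}$ for the second term as well (not the $h^r$ your cancellation would give), which is consistent with the stated bound; your version would be correct, and slightly stronger for that term, only under the additional nested-nodes hypothesis, which you would then have to add to the lemma.
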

where $C$ is a generic constant.
\begin{proof}
We write
$$
(I-I_H^{La})(\bs_h - \bu) = (I-I_H^{La})(\bs_h - I_h^{La}\bu) +(I- I_H^{La})(I_h^{La}\bu-\bu)
$$
Applying \eqref{eq:cotainter_la2} and \eqref{la_c_La} to $\bv_h=\bs_h - I_h^{La}\bu$ and then \eqref{stokespro} and \eqref{eq:cotainter_la}
we get
\begin{align}\label{eq:primer_trun_la}
\|(I-I_H^{La})(\bs_h - I_h^{La}\bu)\|_0&\le  C  \left({H}/{h}\right)^{\frac{d(p-2)}{2p}}H \|\nabla (\bs_h - I_h^{La}\bu)\|_0
\nonumber\\&\le  C\left({H}/{h}\right)^{\frac{d(p-2)}{2p}} H h^{r-1}\|\bu\|_{r}\le C H^2 h^{r-2}\|\bu\|_r,
\end{align}
where in the last inequality we have bounded $(H/h)^{d(p-2)/(2p)}$ by $H/h$.
For the other term we argue as in \eqref{primera} and apply \eqref{eq:cotainter_la} to get
\begin{eqnarray}\label{eq:segun_trun_la}
\|(I- I_H^{La})(I_h^{La}\bu-\bu)\|_0^2&\le& C c_{\rm int} H^2 H^{\frac{d(p-2)}{p}}\sum_{K\in \tau_H}\|\nabla (I_h^{La}\bu-\bu)\|_{L^p(K)}^2\nonumber\\
&\le&C c_{\rm int} H^2 H^{\frac{d(p-2)}{p}}h^{2(r-2)}\sum_{K\in \tau_H} |\bu|_{r-1,p,K}^2.
\end{eqnarray}
Applying \eqref{sob1} with $s=1$ and taking into account $C H^d\le |K|\le C H^d$ we get
$
\|\bu\|_{L^p(K)}\le C H^{1-\frac{d(p-2)}{2p}}\|\bu\|_{1,K},
$
from which
$$
|\bu|_{r-1,p,K}^2 \le C H^{2-\frac{d(p-2)}{p}}\|\bu\|_{r,2,K}^2.
$$
Inserting the above inequality into \eqref{eq:segun_trun_la} we reach
\begin{eqnarray}\label{eq:tercer_trun_la}
\|(I- I_H^{La})(I_h^{La}\bu-\bu)\|_0\le C H^2 h^{r-2}\|\bu\|_{r}.
\end{eqnarray}
Finally, \eqref{eq:trun_la_stokes} follows from \eqref{eq:primer_trun_la} and \eqref{eq:tercer_trun_la}.
\end{proof}

\begin{Theorem}\label{Th:main_la} In the same conditions of Theorem~\ref{Th:main}
(resp.~Theorem~\ref{Th:main_muno0}), if $I_H$ is replaced by
$I_H^{La}$, $H$ satisfies condition~\eqref{eq:as2} with $c_I$ replaced by $c_{La}$ defined in \eqref{la_c_La},
and $H/h$ remains bounded, then, the statement of Theorem~\ref{Th:main} (resp.~Theorem~\ref{Th:main_muno0}) holds with $\gamma$ defined in \eqref{eq:gamma} with $c_I$ replaced by $c_{La}$.
\end{Theorem}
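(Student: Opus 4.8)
The plan is to re-examine the proof of Lemma~\ref{lema_general}, locate the two places where the hypotheses \eqref{eq:L^2inter} and \eqref{eq:cotainter} on the interpolant were used, and replace each by an estimate valid for the Lagrange interpolant $I_H^{La}$. The coercivity steps \eqref{eq:apply(21)a}--\eqref{eq:apply(21)b} invoke \eqref{eq:cotainter} only on $\be_h=\bu_h-\bw_h$, which lies in $X_{h,r}$; since \eqref{eq:cotainter_la2} holds for every finite element function, these steps carry over verbatim with $c_I$ replaced by $c_{La}$. This is precisely the source of the redefinition of \eqref{eq:as2} and of $\gamma$ in \eqref{eq:gamma} announced in the statement, and the requirement that $H/h$ stay bounded guarantees that $c_{La}=C(H/h)^{d(p-2)/(2p)}$ remains bounded.

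The genuine obstacle is the cross term \eqref{eq:error5}, where $L^2$ stability \eqref{eq:L^2inter} was applied to $\bu-\bw_h$, a function that is \emph{not} a finite element function, so that \eqref{eq:cotainter_la2} is unavailable. I would instead use Cauchy--Schwarz and Young's inequality directly on the interpolated factors,
\begin{equation*}
\beta|(I_H^{La}\bu-I_H^{La}\bw_h,I_H^{La}\be_h)|\le \frac{\beta}{2}\|I_H^{La}(\bu-\bw_h)\|_0^2+\frac{\beta}{2}\|I_H^{La}\be_h\|_0^2,
\end{equation*}
so that the second term is again absorbed by the nudging term $\beta\|I_H^{La}\be_h\|_0^2$ on the left-hand side of \eqref{eq:error2}, exactly as before. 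For the first term I split $\|I_H^{La}(\bu-\bw_h)\|_0\le\|\bu-\bw_h\|_0+\|(I-I_H^{La})(\bu-\bw_h)\|_0$, bounding $\|\bu-\bw_h\|_0$ by the Stokes-projection estimate \eqref{stokespro} (or \eqref{stokespro_mod} when $\bw_h=\bs_h^m$) and the second summand by Lemma~\ref{le:(I-I_h)(u-s_h)} (and its obvious analogue for $\bs_h^m$, obtained by replacing \eqref{stokespro} by \eqref{stokespro_mod} in the proof), whose bound \eqref{eq:trun_la_stokes} reads $CH^2h^{r-2}\|\bu\|_r=C(H/h)^2h^r\|\bu\|_r$ and is therefore $O(h^r)$ once $H/h$ is bounded. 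Thus the role played by $c_0^2\|\bu-\bw_h\|_0^2$ in Lemma~\ref{lema_general} is taken over by $\|I_H^{La}(\bu-\bw_h)\|_0^2$, which enjoys the same $O(h^{2r})$ bound.

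With these two substitutions the conclusion of Lemma~\ref{lema_general} holds verbatim for $I_H^{La}$ (with $c_I$ replaced by $c_{La}$), and the proofs of Theorems~\ref{Th:main} and~\ref{Th:main_muno0} then go through unchanged: one simply feeds the modified right-hand side, which contains $\max_{\tau}\|I_H^{La}(\bu-\bw_h)\|_0^2$ in place of $c_0^2\max_{\tau}\|\bu-\bw_h\|_0^2$, into the Gr\"onwall-type integration and uses the same Stokes-projection estimates for $\btau_h^1$ and $\btau_h^2$. Since the new contribution is of the same order $O(h^{2r})$, the final convergence rates are preserved, and the only change in the resulting bounds is the appearance of $c_{La}$ in $\gamma$ and in condition \eqref{eq:as2}. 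The main difficulty is therefore entirely concentrated in controlling $\|I_H^{La}(\bu-\bw_h)\|_0$ without $L^2$ stability of the interpolant, which is exactly the content of Lemma~\ref{le:(I-I_h)(u-s_h)} and the reason the boundedness of $H/h$ is needed.
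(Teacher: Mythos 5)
Your proposal is correct and follows essentially the same route as the paper's proof: the coercivity steps are repaired by applying \eqref{eq:cotainter_la2} to the discrete function $\be_h$ (whence $c_{La}$ and the boundedness of $H/h$), and the nudging cross term is handled by splitting $I_H^{La}(\bu-\bw_h)$ into $(I_H^{La}-I)(\bu-\bw_h)$ plus $\bu-\bw_h$ and invoking Lemma~\ref{le:(I-I_h)(u-s_h)} together with the Stokes-projection estimates, exactly as in the paper. Your explicit remark that Lemma~\ref{le:(I-I_h)(u-s_h)} needs the analogue for $\bs_h^m$ (via \eqref{stokespro_mod}) is a point the paper leaves implicit, but it does not change the argument.
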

\begin{proof}
The proof of the theorem can be obtained arguing exactly as in the proof of Theorem~\ref{Th:main} (resp.~\ref{Th:main_muno0}) with only two differences that we
now state.
We first observe that assuming $H/h$ remains bounded we can apply \eqref{eq:cotainter_la2} instead of
\eqref{eq:cotainter} in~\eqref{eq:apply(21)a} and~\eqref{eq:apply(21)b}.
We also observe that since \eqref{eq:L^2inter} does
not hold for $I_H=I_H^{la}$ we cannot apply \eqref{eq:error5}. Instead,
 adding and subtracting $\bu-\bs_h$ and using \eqref{eq:trun_la_stokes} we get
\begin{align*}
&\beta|(I_H \bu-I_H \bs_h,I_H\be_h)|\le \beta |(I_H-I)(\bu-\bs_h),I_H\be_h)|+\beta|(\bu-\bs_h,I_H\be_h)|\nonumber\\
& \le \beta (C H^2 h^{r-2}\|\bu\|_r+\|\bu-\bs_h\|_0)\|I_H\be_h\|_0\le \beta (C h^r\|\bu\|_r+\|\bu-\bs_h\|_0)\|I_H\be_h\|_0,
\end{align*}
where in the last inequality we have applied that since $H/h$ is bounded then $H\le C h$.
Then we replace \eqref{eq:error5} in the proof of Lemma~\ref{lema_general} and consequently in the proof of Theorem~\ref{Th:main} (resp.~\ref{Th:main_muno0}). by the following inequality
$$
\beta|(I_H \bu-I_H \bs_h,I_H\be_h)|\le \frac{\beta}{2}(C h^r\|\bu\|_r+\|\bu-\bs_h\|_0)^2+\frac{\beta}{2}\|I_H \be_h\|_0^2
$$
and we can conclude applying the same arguments.
\end{proof}

\section{Numerical experiments}\label{se:num}

We check the results of the previous section with some numerical experiments. As it is customary for these purposes, we use an example with a known solution. In particular, we consider
 the Navier-Stokes equations in $\Omega=[0,1]^2$, with the forcing term~$\bff$ chosen so that the solution $\bu$ and~$p$ are given by
\begin{eqnarray}
\label{eq:exactau}
\bu(x,y,t)&=& \frac{6+4\cos(4t)}{10} \left[\begin{array}{c} 8\sin^2(\pi x) (2y(1-y)(1-2y)\\
-8\pi\sin(2*\pi x) (y(1-y))^2\end{array}\right]\\
p(x,y,t)&=&\frac{6+4\cos(4t)}{10} \sin(\pi x)\cos(\pi y).
\label{eq:exactap}
\end{eqnarray}
For the spatial discretization we used $P_2/P_1$ elements on a regular triangulation with SW-NE diagonals, with
the same number of subdivisions on each coordinate direction. For for coarse mesh interpolation we take piecewise
constants. The time integration was done with an implicit/explicit (IMEX) method based on the second order
backward differentiation formula (BDF), where, to avoid solving nonlinear steady problems at each step, linear extrapolation of the form
$
b_h(2\bu_h(t-\Delta t)-\bu_h(t-2\Delta t),\bu_h(t),\bvar_h)
$
was used in the convection term, where $\Delta t$ is the time step, except in the first step where
$b_h(\bu_h(t-\Delta t),\bu_h(t),\bvar_h)$ was used. The time step was chosen so that the error arising from the spatial
discretization was dominant. To check that this was the case, we made sure that results were not essentially altered
if recomputed with a smaller $\Delta t$. Unless stated otherwise, in what follows the initial condition was set to~$\bu_h={\bf 0}$ and~$p=0$, so that there is
an $O(1)$ error at time $t=0$.

We first check that there is no upper bound on the nudging parameter~$\beta$. The left plot in~Fig.~\ref{fig0} shows the velocity
errors in~$L^2$ vs time for different values of~$\beta$ for $\nu=10^{-6}$, including~$\beta=100$. It can be seen a clear difference
between $\beta=0$, where the initial errors do not decay with time, and $\beta>0$ where they do, and for the four
largest values of~$\beta$ shown, they do so exponentially in time, until an asymptotic regime is reached.
 We also notice
that the results are little altered for $\beta\ge 10$.
\begin{figure}[h]
\label{fig0}
\begin{center}
\includegraphics[height=5.1truecm]{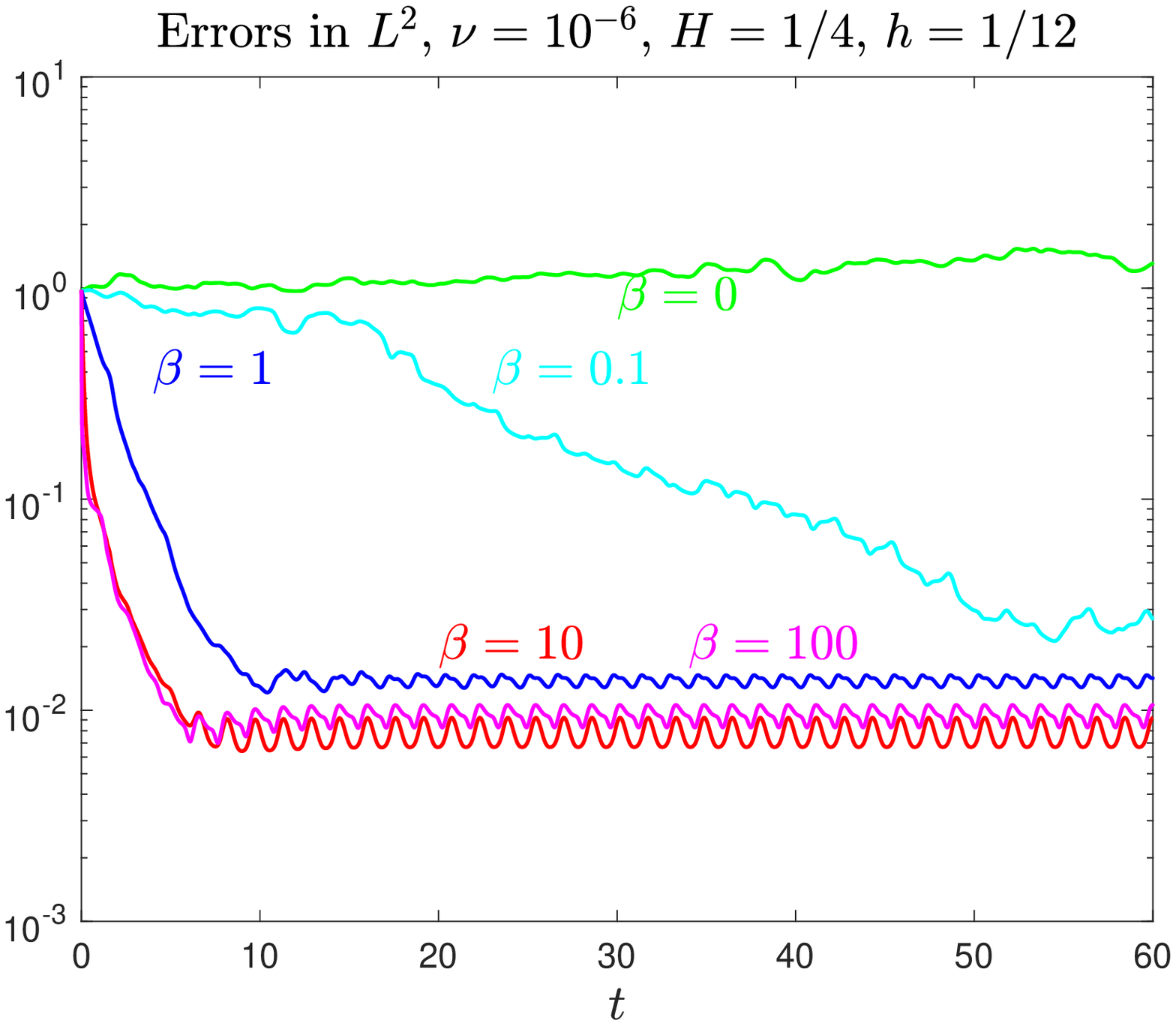}\,
\includegraphics[height=5.1truecm]{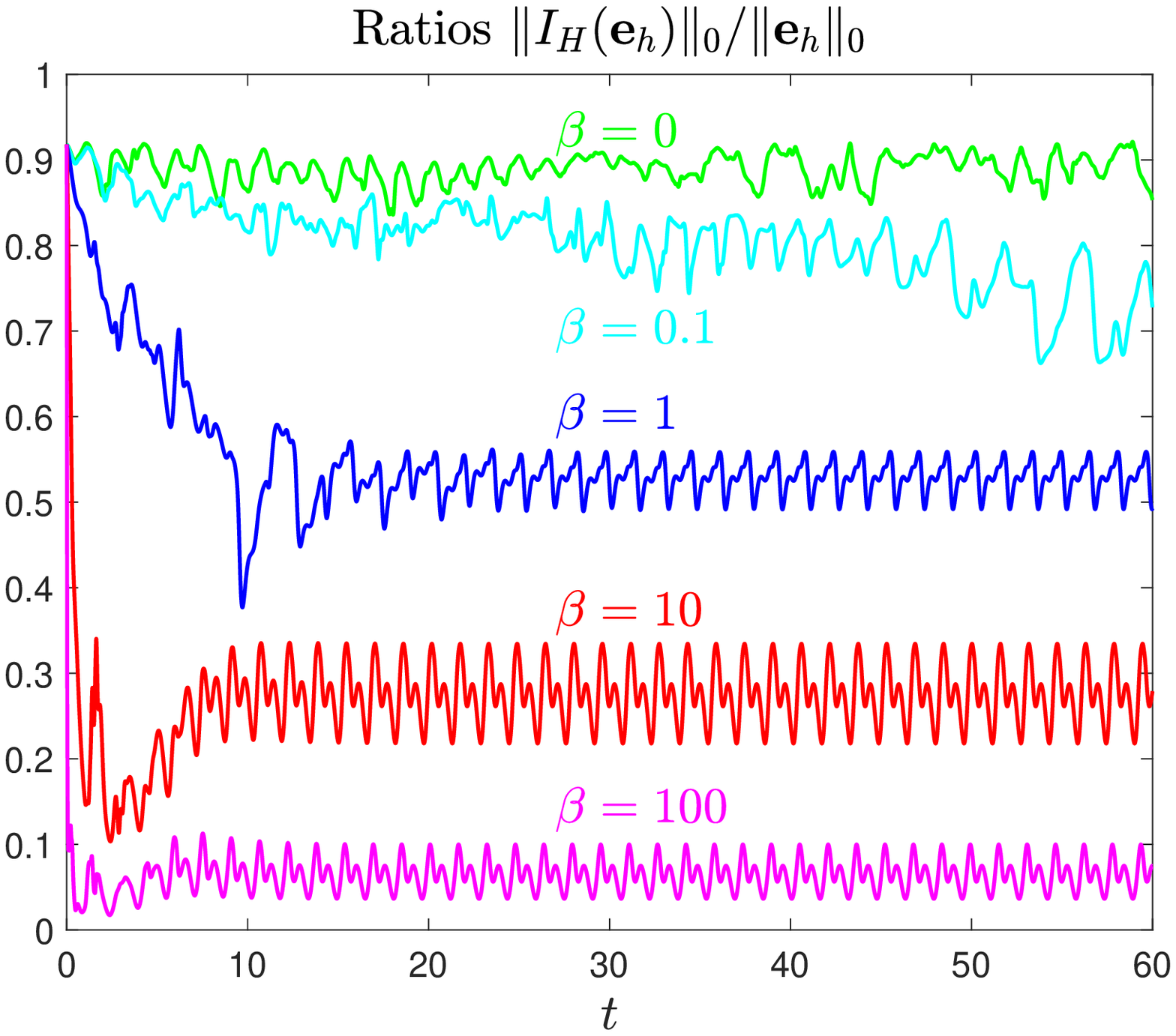}
\end{center}
\caption{Velocity errors vs time}
\end{figure}
In view of~(\ref{eq:gamma}), one may be temped to question the advantage of taking~$\beta\ge \nu(c_IH)^{-2}$,  since the rate of decay of the initial errors, $\gamma$,
is unaltered for larger values of~$\beta$. If we assume that $c_I$ is of order one, then,
the value of $ \nu(c_IH)^{-2}$ in the present example is unlikely
to be larger than $10^{-5}$, so that Fig.~\ref{fig0} (and more examples in~\cite{Larios_et_al} and~\cite{Rebholz-Zerfas})
seems to suggest that there is some advantage in taking $\beta\ge \nu(c_IH)^{-2}$ if we want a faster decay of
the initial errors. Since this is in apparent contradiction with the analysis in the previous section, we now propose
an alternative explanation.

Let us consider  for some integer $k\ge 2$ the value $r=\|I_H(\bu(0))\|_0/(k\|\bu(0)\|_0)$. If we take the intial condtion $\bu_h=0$, then,
by continuity there exist $t_0>0$ such that
\begin{equation}
\label{laineg}
\|I_H(\be_h(t)\|_0\ge r\|\be_h(t)\|_0,\qquad t\in[0,t_0].
\end{equation}
Consequently, from (\ref{eq:otra}) it follows that
\begin{align*}
\frac{d}{dt}\|\be_h\|_0^2+(\beta r^2-2L)\| \be_h\|_0^2
\le &
2\overline k \|\tau_h^2\|_0^2+{\beta} c_0^2\|\bu-\bw_h\|_0^2\\
&{}+\left(\!(1-\overline \mu)\frac{2\hat c_P}{\nu}+{\overline \mu}{L}\!\right)\!\|\btau_h^1\|_{-1+\overline \mu}^2,
\end{align*}
for $t\in[0,t_0]$, which, for $\beta> 2L/r^2$, could explain that initial errors decay faster when larger values of~$\beta$ are taken.
In Fig.~\ref{fig0} we also show the ratios~$\|I_H(\be_h)\|_0/\|\be_h\|_0$. It can be seen that although they became smaller
as $\beta$ is increased, they are sufficiently away from zero to suggest that the analysis in the present section
may explain the faster rates of decay of the initial errors when larger values of~$\beta$ are taken.


In the reminder of this section we take~$\beta=1$.
Since, as shown in Fig.~\ref{fig0}, after an initial decay, the errors show an oscillatory behaviour, in the examples below by $L^2$ errors we mean
the maximum of errors $\|\bu_h(t)-\bu(t)\|_0$ for values of $t$ after the asymptotic regime has shown itself.

We now check the rates of convergence proved in the present paper.
In Fig.~\ref{fig1} we show errors vs~$h$ for different values of the diffusion parameter~$\nu$ and compare
the cases of positive $\mu$ ($\mu=0.05$) and~$\mu=0$.
The value of~$H$ is~$H=3h$ and $\beta$ is set to~$\beta=1$.
Results corresponding to the smallest value of~$\nu$ are represented with
discontinuous lines in both plots so that they can be seen superimposed to those corresponding to larger values of~$\nu$.  Slopes of least squares fits to the results corresponding to each value of~$\nu$ are shown, so that
the order of convergence can be checked. In both cases, $\mu=0.05$ and~$\mu=0$, $O(h^3)$ errors are obtained for large values of~$\nu$, which is what Theorem~\ref{Th:main} and~Remark~\ref{re:nu_large} predict.
However, for smaller values of~$\nu$, while the errors with~positive $\mu$ become $O(h^2)$ and independent of~$\nu$,
as Theorem~\ref{Th:main_muno0} predicts,
for~$\mu=0$ the method does not have convergent behaviour for the values of~$h$ shown (presumably, the method will show
convergence for $h\le \nu$).
\begin{figure}[h]
\label{fig1}
\begin{center}
\includegraphics[height=5.1truecm]{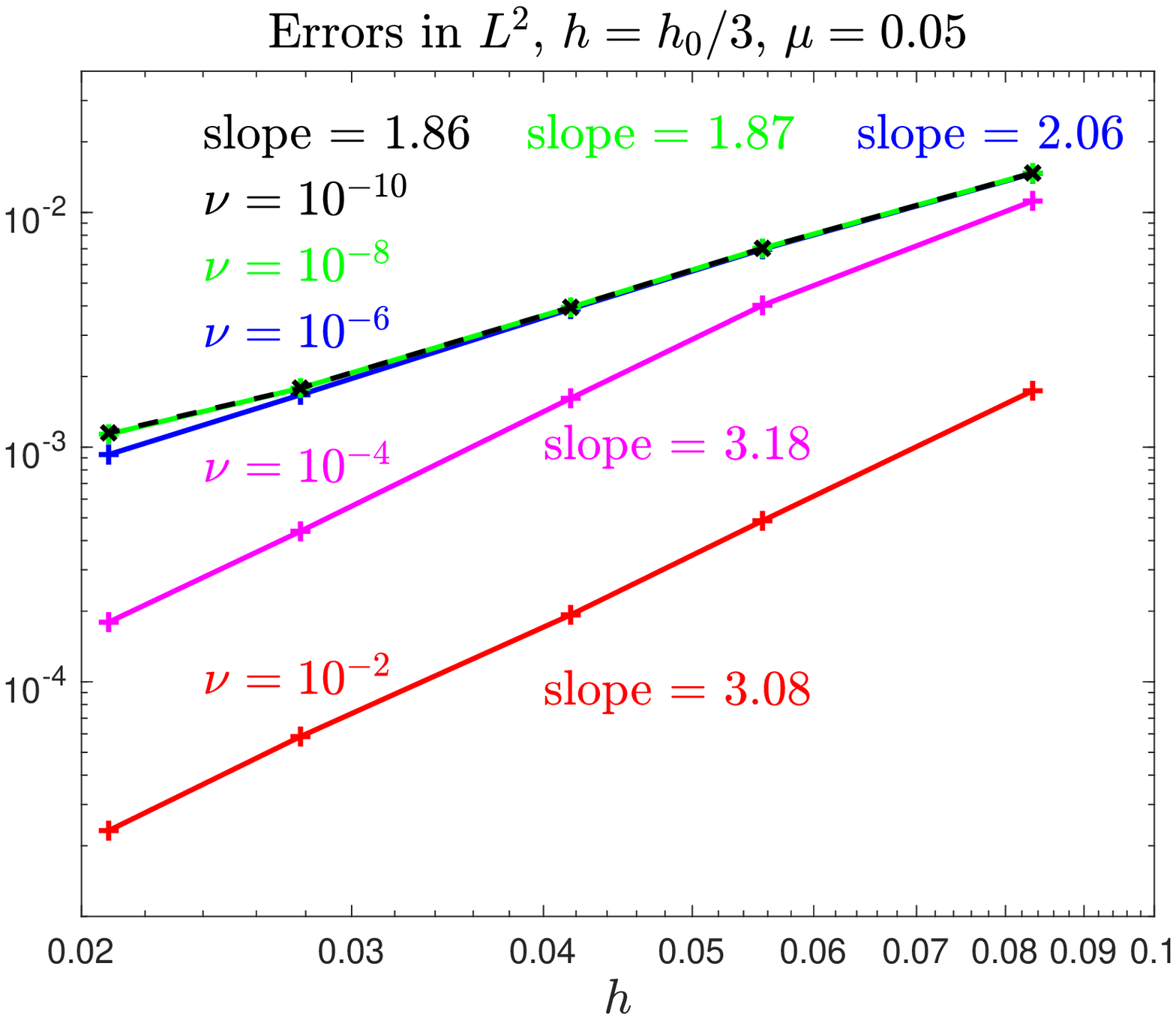} \, \includegraphics[height=5.1truecm]{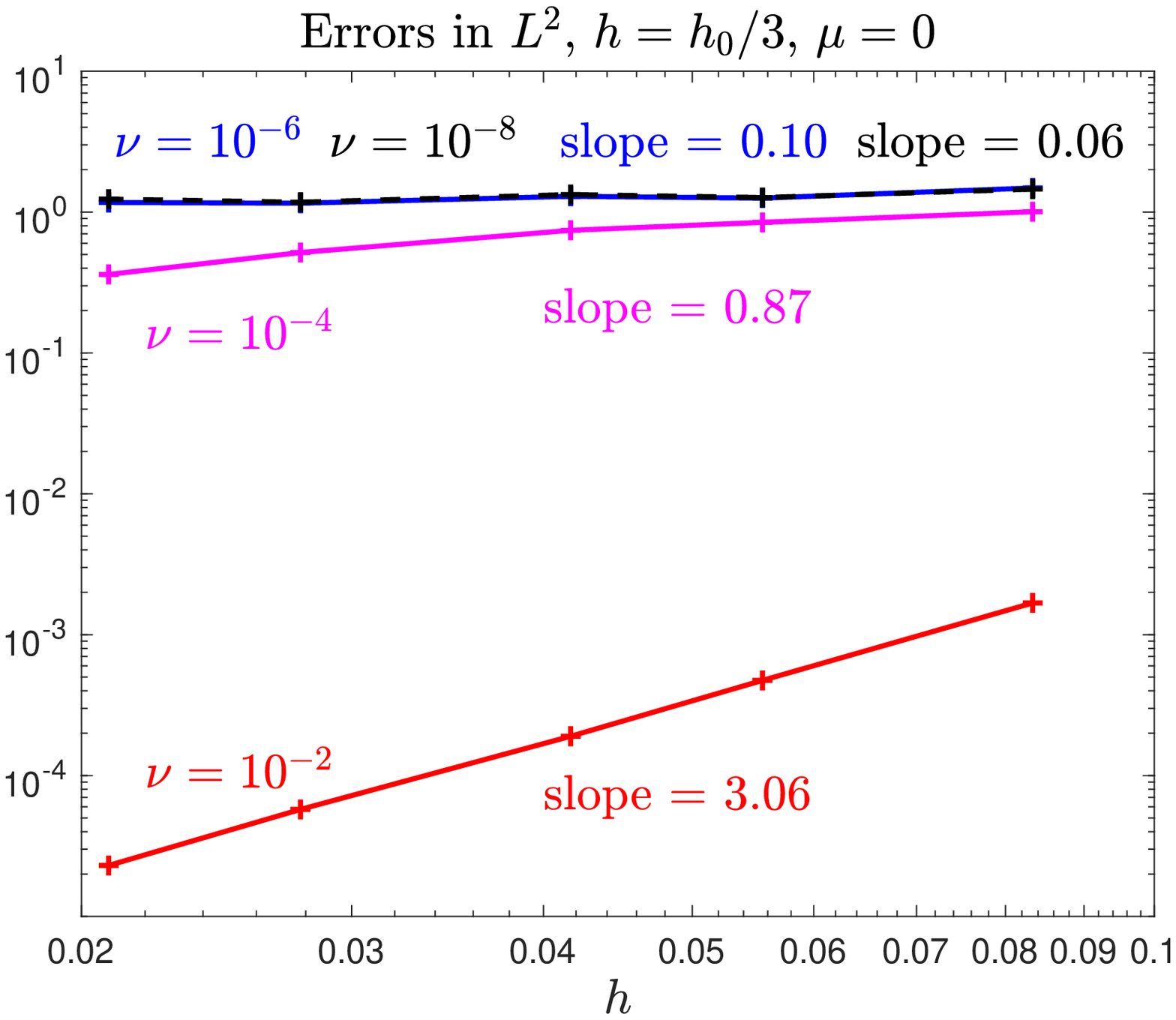}
\end{center}
\caption{Velocity errors. Left, $\mu=0.05$ Right, $\mu=0$.}
\end{figure}

Finally, we check that the requirement~$H/h$ bounded is required for convergence if Lagrange interpolants are used.
In Fig.~\ref{fig2} we show velocity errors when $h\rightarrow 0$ in two different scenarios:  $h=H/3$ (left) and $H$
fixed to~$H=0.25$. We see that while $H=3h$, the method converge as predicted by Theorem~\ref{Th:main_la} (the value
of~$\nu=10^{-6}$ and that of~$\mu=0.05$). If $H$ is kept fixed, however, the method using the Lagrange Interpolant
does not exhibit convergent behaviour.
We remark, however, that with larger values of $\beta$ or~$\nu$, convergence is not altered as much as in Fig.~\ref{fig2} when $H/h$ grows.
Nevertheless, this example shows the risks of not keeping $(H/h)$ bounded with Lagrange interpolants.

%
\begin{figure}
\label{fig2}
\begin{center}
\includegraphics[height=5.1truecm]{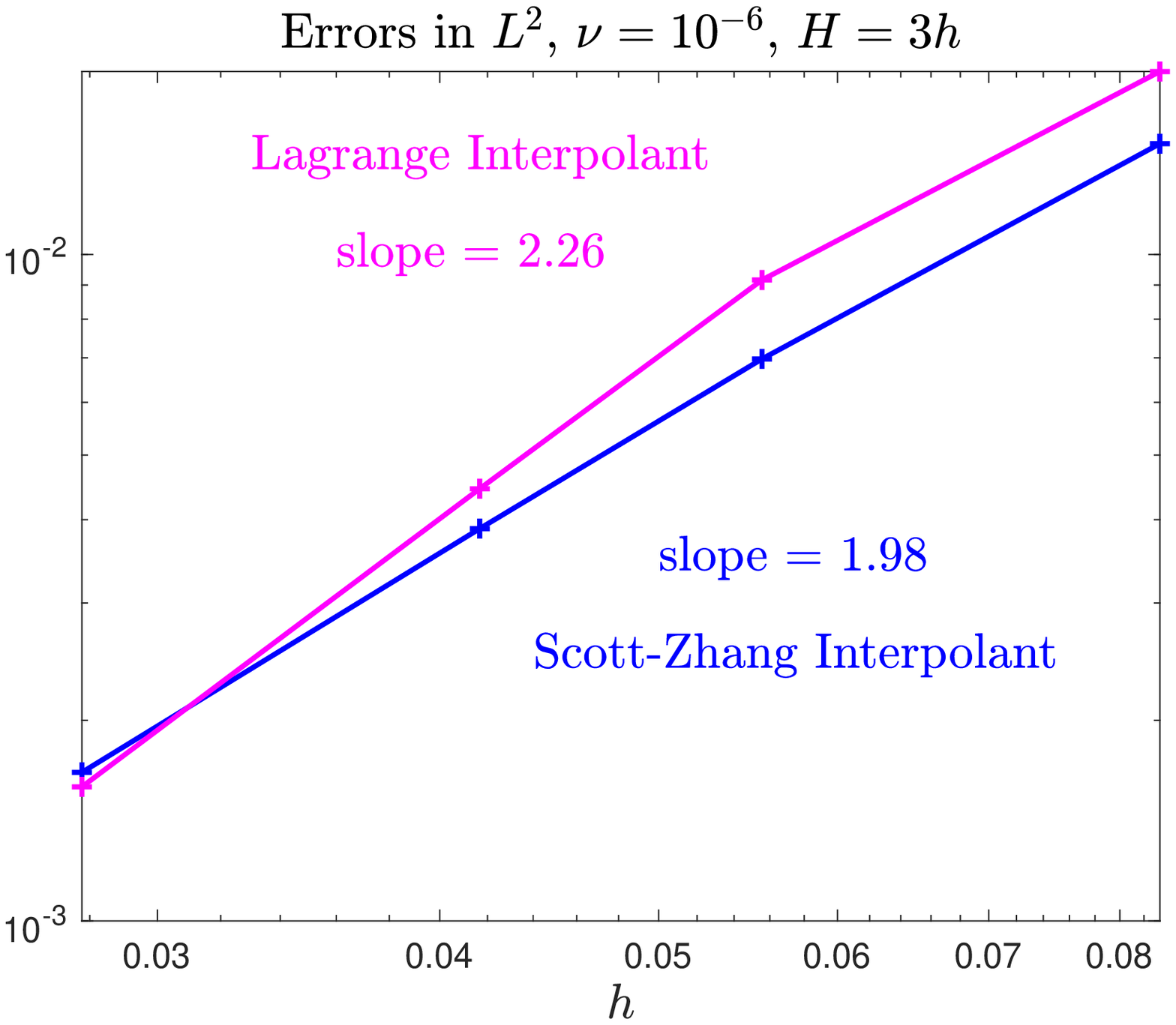} \, \includegraphics[height=5.1truecm]{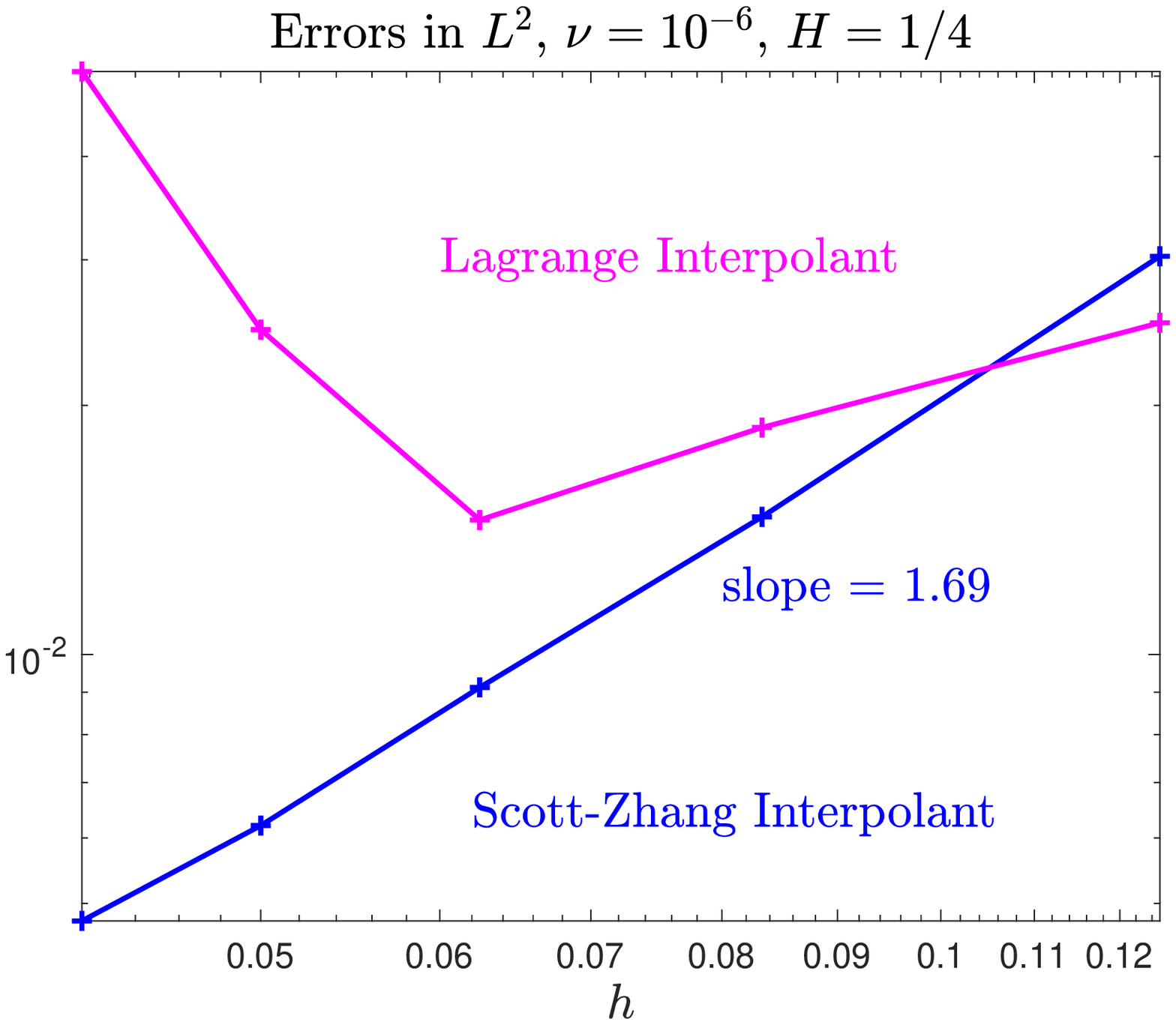}
\end{center}
\caption{Velocity errors. Left $H=3h$. Right $H=0.25$.}
\end{figure}

\section{Conclusions}  We have analyzed a semidiscretization in space by inf-sup stable mixed finite elements of a  continuous downscaling data assimilation method for the two and three-dimensional Navier-Stokes equations.  The data assimilation method, introduced in~\cite{Az_Ol_Ti},  combines observational data (measurements) on  large spatial scales or coarse mesh, $I_H\bu$,  with
simulations in order to improve predictions of the physical phenomenon  being studied.  We have considered the Galerkin method with and without grad-div stabilization. Uniform error bounds in time have been obtained for
the approximation to velocity field, under standard assumptions in finite element analysis. The order of convergence proved for the method wthout stabilizationi is optimal, in the sense that it is the best that can be obtained with the finite element space
being used (i.e., errors of the same order as interpolation). For the Galerkin method with grad-div stabilization error bounds in which the constants are independent on inverse powers of the viscosity are proved.  Convergence rates and dependence or independence of~$\nu$ are corroborated in numerical experiments. As opposed to previous works in the literature,
our analysis also covers the case in which $I_H \bu$ is the standard Lagrange interpolant, where we show that~$H/h$ must be kept bounded in order to get convergence. Also, the upper bound on the nudging parameter assumed in previous references is removed. The techniques of analysis used in the present paper allow to improve the available error bounds for a closely-related finite element method in~\cite{Larios_et_al}.
\bibliographystyle{abbrv}

\bibliography{references}

\begin{thebibliography}{10}

\bibitem{Adams}
R.~A. Adams and J.~J.~F. Fournier.
\newblock {\em Sobolev spaces}, volume 140 of {\em Pure and Applied Mathematics
  (Amsterdam)}.
\newblock Elsevier/Academic Press, Amsterdam, second edition, 2003.

\bibitem{Agmon}
S.~Agmon.
\newblock {\em Lectures on elliptic boundary value problems}.
\newblock AMS Chelsea Publishing, Providence, RI, 2010.
\newblock Prepared for publication by B. Frank Jones, Jr. with the assistance
  of George W. Batten, Jr., Revised edition of the 1965 original.

\bibitem{Altaf_et_al}
M.~U. Altaf, E.~S. Titi, T.~Gebrael, O.~M. Knio, L.~Zhao, M.~F. McCabe, and
  I.~Hoteit.
\newblock Downscaling the 2d b{\'e}nard convection equations using continuous
  data assimilation.
\newblock {\em Computational Geosciences}, 21(3):393--410, June 2017.

\bibitem{Asch_et_al_2016}
M.~Asch, M.~Bocquet, and M.~Nodet.
\newblock {\em Data assimilation}, volume~11 of {\em Fundamentals of
  Algorithms}.
\newblock Society for Industrial and Applied Mathematics (SIAM), Philadelphia,
  PA, 2016.
\newblock Methods, algorithms, and applications.

\bibitem{Ay_Gar_Nov}
B.~Ayuso, B.~Garc\'{\i}a-Archilla, and J.~Novo.
\newblock The postprocessed mixed finite-element method for the
  {N}avier-{S}tokes equations.
\newblock {\em SIAM J. Numer. Anal.}, 43(3):1091--1111, 2005.

\bibitem{Az_Ol_Ti}
A.~Azouani, E.~Olson, and E.~S. Titi.
\newblock Continuous data assimilation using general interpolant observables.
\newblock {\em J. Nonlinear Sci.}, 24(2):277--304, 2014.

\bibitem{Ber_Gir}
C.~Bernardi and V.~Girault.
\newblock A local regularization operator for triangular and quadrilateral
  finite elements.
\newblock {\em SIAM J. Numer. Anal.}, 35(5):1893--1916, 1998.

\bibitem{brenner-scot}
S.~C. Brenner and L.~R. Scott.
\newblock {\em The mathematical theory of finite element methods}, volume~15 of
  {\em Texts in Applied Mathematics}.
\newblock Springer-Verlag, New York, second edition, 2002.

\bibitem{BF}
F.~Brezzi and R.~S. Falk.
\newblock Stability of higher-order {H}ood-{T}aylor methods.
\newblock {\em SIAM J. Numer. Anal.}, 28(3):581--590, 1991.

\bibitem{Brezzi-Fortin91}
F.~Brezzi and M.~Fortin.
\newblock {\em Mixed and hybrid finite element methods}, volume~15 of {\em
  Springer Series in Computational Mathematics}.
\newblock Springer-Verlag, New York, 1991.

\bibitem{chenSiam}
H.~Chen.
\newblock Pointwise error estimates for finite element solutions of the
  {S}tokes problem.
\newblock {\em SIAM J. Numer. Anal.}, 44(1):1--28, 2006.

\bibitem{Cia78}
P.~G. Ciarlet.
\newblock {\em The finite element method for elliptic problems}, volume~40 of
  {\em Classics in Applied Mathematics}.
\newblock Society for Industrial and Applied Mathematics (SIAM), Philadelphia,
  PA, 2002.
\newblock Reprint of the 1978 original [North-Holland, Amsterdam; MR0520174 (58
  \#25001)].

\bibitem{Constantin-Foias}
P.~Constantin and C.~Foias.
\newblock {\em Navier-{S}tokes equations}.
\newblock Chicago Lectures in Mathematics. University of Chicago Press,
  Chicago, IL, 1988.

\bibitem{Daley_1993}
R.~Daley.
\newblock {\em Navier-{S}tokes equations}.
\newblock Cambridge Atmospheric and Space Science Series. Cambridge University
  Press, Cambridge, 1991.

\bibitem{grad-div1}
J.~de~Frutos, B.~Garc\'{\i}a-Archilla, V.~John, and J.~Novo.
\newblock Grad-div stabilization for the evolutionary {O}seen problem with
  inf-sup stable finite elements.
\newblock {\em J. Sci. Comput.}, 66(3):991--1024, 2016.

\bibitem{grad-div2}
J.~de~Frutos, B.~Garc\'{\i}a-Archilla, V.~John, and J.~Novo.
\newblock Analysis of the grad-div stabilization for the time-dependent
  {N}avier-{S}tokes equations with inf-sup stable finite elements.
\newblock {\em Adv. Comput. Math.}, 44(1):195--225, 2018.

\bibitem{proyNS}
J.~de~Frutos, B.~Garc\'{\i}a-Archilla, and J.~Novo.
\newblock Error analysis of projection methods for non inf-sup stable mixed
  finite elements: the {N}avier-{S}tokes equations.
\newblock {\em J. Sci. Comput.}, 74(1):426--455, 2018.

\bibitem{FJJTi}
A.~Farhat, H.~Johnston, M.~Jolly, and E.~S. Titi.
\newblock Assimilation of nearly turbulent rayleigh--b{\'e}nard flow through
  vorticity or local circulation measurements: A computational study.
\newblock {\em Journal of Scientific Computing}, Mar 2018.

\bibitem{FJTi}
A.~Farhat, M.~S. Jolly, and E.~S. Titi.
\newblock Continuous data assimilation for the 2{D} {B}\'enard convection
  through velocity measurements alone.
\newblock {\em Phys. D}, 303:59--66, 2015.

\bibitem{FLTi}
A.~Farhat, E.~S. Lunasin, and E.~S. Titi.
\newblock On the charney conjecture of data assimilation employing temperature
  measurements alone: the paradigm of 3d planetary geostrophic model,.
\newblock {\em Math. Clim. Weather Forecast.}, 2:59--66, 2016.

\bibitem{FH88}
L.~P. Franca and T.~J.~R. Hughes.
\newblock Two classes of mixed finite element methods.
\newblock {\em Comput. Methods Appl. Mech. Engrg.}, 69(1):89--129, 1988.

\bibitem{Galdi}
G.~P. Galdi.
\newblock {\em An introduction to the mathematical theory of the
  {N}avier-{S}tokes equations. {V}ol. {I}}, volume~38 of {\em Springer Tracts
  in Natural Philosophy}.
\newblock Springer-Verlag, New York, 1994.
\newblock Linearized steady problems.

\bibitem{Titi1}
B.~Garc\'{\i}a-Archilla, J.~Novo, and E.~S. Titi.
\newblock Postprocessing the {G}alerkin method: a novel approach to approximate
  inertial manifolds.
\newblock {\em SIAM J. Numer. Anal.}, 35(3):941--972, 1998.

\bibitem{Titi2}
B.~Garc\'{\i}a-Archilla, J.~Novo, and E.~S. Titi.
\newblock An approximate inertial manifolds approach to postprocessing the
  {G}alerkin method for the {N}avier-{S}tokes equations.
\newblock {\em Math. Comp.}, 68(227):893--911, 1999.

\bibitem{Gesho-Olson-Titi}
M.~Gesho, E.~Olson, and E.~S. Titi.
\newblock A computational study of a data assimilation algorithm for the
  two-dimensional {N}avier-{S}tokes equations.
\newblock {\em Commun. Comput. Phys.}, 19(4):1094--1110, 2016.

\bibitem{Girault-Lions-2001}
V.~Girault and J.-L. Lions.
\newblock Two-grid finite-element schemes for the transient {N}avier-{S}tokes
  problem.
\newblock {\em M2AN Math. Model. Numer. Anal.}, 35(5):945--980, 2001.

\bibitem{girrav}
V.~Girault and P.-A. Raviart.
\newblock {\em Finite element methods for {N}avier-{S}tokes equations},
  volume~5 of {\em Springer Series in Computational Mathematics}.
\newblock Springer-Verlag, Berlin, 1986.
\newblock Theory and algorithms.

\bibitem{HOTi}
K.~Hayden, E.~Olson, and E.~S. Titi.
\newblock Discrete data assimilation in the {L}orenz and 2{D} {N}avier-{S}tokes
  equations.
\newblock {\em Phys. D}, 240(18):1416--1425, 2011.

\bibitem{heyran0}
J.~G. Heywood and R.~Rannacher.
\newblock Finite element approximation of the nonstationary {N}avier-{S}tokes
  problem. {I}. {R}egularity of solutions and second-order error estimates for
  spatial discretization.
\newblock {\em SIAM J. Numer. Anal.}, 19(2):275--311, 1982.

\bibitem{heyran2}
J.~G. Heywood and R.~Rannacher.
\newblock Finite element approximation of the nonstationary {N}avier-{S}tokes
  problem. {III}. {S}moothing property and higher order error estimates for
  spatial discretization.
\newblock {\em SIAM J. Numer. Anal.}, 25(3):489--512, 1988.

\bibitem{Ibdah_Mondaini_Titi}
H.~A. Ibdah, C.~F. Mondaini, and E.~S. Titi.
\newblock Uniform in time error estimates for fully discrete numerical schemes
  of a data assimilation algorithm.
\newblock {\em arXiv:1805.01595v1}, 2018.

\bibitem{JK10}
V.~John and A.~Kindl.
\newblock Numerical studies of finite element variational multiscale methods
  for turbulent flow simulations.
\newblock {\em Comput. Methods Appl. Mech. Engrg.}, 199(13-16):841--852, 2010.

\bibitem{Kalnay_2003}
E.~Kalnay.
\newblock {\em Atmospheric Modeling, Data Assimilation and Predictability}.
\newblock Cambridge University Press, 2002.

\bibitem{Larios_et_al}
A.~Larios, L.~G. Rebholz, and C.~Zerfas.
\newblock Global in time statbility and accuracy of imex-fem data assimilation
  schemes for the navier-stokes equations.
\newblock {\em arXiv:1805.04090v1}, 2018.

\bibitem{Law_Stuart_Zygalakis}
K.~Law, A.~Stuart, and K.~Zygalakis.
\newblock {\em Data assimilation}, volume~62 of {\em Texts in Applied
  Mathematics}.
\newblock Springer, Cham, 2015.
\newblock A mathematical introduction.

\bibitem{Lunasin-Titi}
E.~Lunasin and E.~S. Titi.
\newblock Finite determining parameters feedback control for distributed
  nonlinear dissipative systems---a computational study.
\newblock {\em Evol. Equ. Control Theory}, 6(4):535--557, 2017.

\bibitem{Mondaini_Titi}
C.~F. Mondaini and E.~S. Titi.
\newblock Uniform-in-time error estimates for the postprocessing {G}alerkin
  method applied to a data assimilation algorithm.
\newblock {\em SIAM J. Numer. Anal.}, 56(1):78--110, 2018.

\bibitem{Rebholz-Zerfas}
L.~G. {Rebholz} and C.~{Zerfas}.
\newblock {Simple and efficient continuous data assimilation of evolution
  equations via algebraic nudging}.
\newblock {\em arXiv e-prints}, page arXiv:1810.03512, Oct. 2018.

\bibitem{Reich_Cotter_2015}
S.~Reich and C.~Cotter.
\newblock {\em Probabilistic forecasting and {B}ayesian data assimilation}.
\newblock Cambridge University Press, New York, 2015.

\bibitem{RL10}
L.~R\"{o}he and G.~Lube.
\newblock Analysis of a variational multiscale method for large-eddy simulation
  and its application to homogeneous isotropic turbulence.
\newblock {\em Comput. Methods Appl. Mech. Engrg.}, 199(37-40):2331--2342,
  2010.

\bibitem{Schatz98}
A.~H. Schatz.
\newblock Pointwise error estimates and asymptotic error expansion inequalities
  for the finite element method on irregular grids. {I}. {G}lobal estimates.
\newblock {\em Math. Comp.}, 67(223):877--899, 1998.

\bibitem{Schatz-Whalbin}
A.~H. Schatz and L.~B. Wahlbin.
\newblock On the quasi-optimality in {$L_{\infty }$}\ of the {$\dot
  H^{1}$}-projection into finite element spaces.
\newblock {\em Math. Comp.}, 38(157):1--22, 1982.

\bibitem{Scott-Z}
L.~R. Scott and S.~Zhang.
\newblock Finite element interpolation of nonsmooth functions satisfying
  boundary conditions.
\newblock {\em Math. Comp.}, 54(190):483--493, 1990.

\bibitem{hood0}
C.~Taylor and P.~Hood.
\newblock A numerical solution of the {N}avier-{S}tokes equations using the
  finite element technique.
\newblock {\em Internat. J. Comput. \& Fluids}, 1(1):73--100, 1973.

\end{thebibliography}

\end{document}